\documentclass[12pt]{article}
\usepackage{amsmath,amsthm,amsfonts,amssymb}
\usepackage{graphicx}
\usepackage[utf8]{inputenc}
\usepackage{times}
\usepackage[T1]{fontenc}
\relpenalty = 10000
\binoppenalty = 10000

\def\C{{\mathbb C}}
\def\D{{\mathbb D}}

\def\R{{\mathbb R}}

\def\T{{\mathbb T}}
\def\Z{{\mathbb Z}}
\def\CP{{\mathbb{CP}}}
\def\cO{{\mathcal{O}}}
\def\U{\mathrm{U}}
\def\dvf#1{{\underrightarrow{#1}}} 
\def\ddC{{d\,d^{\scriptscriptstyle\C}}}
\def\dC{{d^{\scriptscriptstyle\C}}}
\def\ddJ{{d\,d^{\scriptscriptstyle J}}}
\def\dJ{{d^{\scriptscriptstyle J}}}
\def\etc{,\dots,}
\def\wt{\widetilde}
\def\wh{\widehat}
\def\ol{\overline}

\def\map{\colon}
\def\del{\partial}
\def\eps{\varepsilon}
\def\pii{\pi i}
\def\rst#1{{\upharpoonright}_{\!#1}}
\def\scup{\mathop{\smallsmile}}

\def\with{\mathrel{:}}
\def\id{\mathrm{id}}
\def\hook{\mathbin{\lrcorner}}
\def\Step#1){\noindent\hbox to \parindent{#1)\hfill}}
\theoremstyle{plain}
\newtheorem{theorem}{Theorem}
\newtheorem{corollary}[theorem]{Corollary}
\newtheorem{proposition}[theorem]{Proposition}
\newtheorem{lemma}[theorem]{Lemma}

\theoremstyle{definition}
\newtheorem{definition}[theorem]{Definition}

\theoremstyle{remark}
\newtheorem*{acknowledgments}{Acknowledgments}
\newtheorem*{remark*}{Remark}
\newtheorem{remark}[theorem]{Remark}

\setcounter{theorem}{-1}

\def\cC{{\mathcal{C}}}
\DeclareMathOperator{\Sk}{Sk}
\DeclareMathOperator{\Cl}{Clos}

\begin{document}

\title{Remarks on Donaldson's symplectic submanifolds}
\author{Emmanuel \textsc{Giroux}\thanks%
{\ \emph{Centre National de la Recherche Scientifique} (UMI 3457 CRM) and \emph
{Université de Montréal}. Partially supported by the ANR grant \emph{MICROLOCAL}
(ANR-15CE40-0006).}}
\date{Montreal --- December 2017}

\maketitle

In \cite{Do1}, S.~Donaldson proved the following:

\begin{theorem}[Donaldson] \label{t}
Let $V$ be a closed manifold and $\omega$ a symplectic form on $V$ with integral
periods. Then, for every sufficiently large positive integer $k$, there exists a
symplectic submanifold $W$ of codimension $2$ in $(V,\omega)$ whose homology
class is Poincaré dual to $k[\omega]$ and whose inclusion into $V$ is an
$(n-1)$-connected map, where $n := \frac12 \dim_\R V$.
\end{theorem}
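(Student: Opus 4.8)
The plan is to realize $W$ as the zero set of an approximately holomorphic section of a high tensor power of a prequantum line bundle, following Donaldson's scheme. First I would fix the geometric data: since $[\omega]$ is integral there is a Hermitian line bundle $(L,\nabla)$ on $V$ with curvature $-2\pi i\,\omega$, so that $c_1(L)=[\omega]$, and I take the powers $L^k = L^{\otimes k}$, whose curvature is $-2\pi i k\,\omega$. Choosing an almost complex structure $J$ compatible with $\omega$ gives a metric $g=\omega(\cdot,J\cdot)$; rescaling it to $g_k = k\,g$ makes $1/\sqrt k$ the natural length scale. The connection on $L^k$ splits as $\nabla=\del+\ol\del$ into its $J$-linear and $J$-antilinear parts.

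Second, I would introduce the two properties a section $s_k$ of $L^k$ should have. Call the sequence asymptotically holomorphic if, measured in $g_k$, the section and its first two derivatives are $O(1)$ while $\ol\del s_k$ and its derivative are $O(k^{-1/2})$; call $s_k$ $\eta$-transverse to zero if $|\nabla s_k|\geq\eta$ wherever $|s_k|<\eta$. The first, easier half of the argument is that whenever $(s_k)$ is asymptotically holomorphic and uniformly $\eta$-transverse (for a fixed $\eta>0$ and all large $k$), the zero set $W_k := s_k^{-1}(0)$ is a smooth codimension-$2$ submanifold that is symplectic: transversality makes $W_k$ a manifold, and the bound on $\ol\del s_k$ forces $\ker\nabla s_k$ to be $C^0$-close to a $J$-complex hyperplane, so $\omega$ stays nondegenerate on $T W_k$; Poincaré duality then identifies $[W_k]$ with $c_1(L^k)=k[\omega]$.

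Third, and this is the heart of the matter, I would prove that such a uniformly transverse asymptotically holomorphic sequence exists. Starting from any asymptotically holomorphic sequence, one perturbs by small amounts to gain transversality. The indispensable local tool is a family of concentrated peak sections centered at the points of a $g_k$-separated net: each is asymptotically holomorphic and decays like a Gaussian on scale $1/\sqrt k$, so adding a small multiple alters the section essentially only near its center. Over one Darboux ball, achieving $\eta$-transversality reduces to a quantitative Sard lemma---Donaldson's polynomial bound on the measure of near-critical values of an almost-holomorphic $\C$-valued map. The difficulty, and the step I expect to absorb most of the work, is the globalization: one must patch the local perturbations over the whole of $V$ so that gains in transversality are not destroyed elsewhere, which is managed by Donaldson's iteration over the finitely many colors of a fixed decomposition of the net, with each stage degrading the constant $\eta$ in a controlled, geometrically summable way.

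Finally, for the connectivity statement I would run an almost-complex Lefschetz hyperplane argument. On $V\setminus W_k$ the function $h=\log|s_k|$ is smooth, bounded above, and tends to $-\infty$ along $W_k$; asymptotic holomorphicity makes $-h$ approximately plurisubharmonic for large $k$, so after a $C^2$-small Morse perturbation the near-$J$-invariance of the Hessian forces every critical point of $h$ to have index $\geq n$. Morse theory then presents $V$ as a tubular neighborhood of $W_k$ with handles of index $\geq n$ attached, so the pair $(V,W_k)$ is $(n-1)$-connected and the inclusion $W_k\hookrightarrow V$ is an $(n-1)$-connected map. Taking $W=W_k$ for any sufficiently large $k$ completes the proof.
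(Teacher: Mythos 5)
Your proposal is, in outline, Donaldson's original proof, and that is exactly what this paper offers for Theorem \ref{t}: the statement is quoted from \cite{Do1} and never reproved here, the construction being only recalled (in Auroux's terminology) at the start of Subsection \ref{s:wc}. All of your steps match that scheme: the prequantum bundle and rescaled metric $g_k$, the asymptotically holomorphic/uniformly transverse dichotomy, symplecticity of the zero set from $|\nabla''s_k|<|\nabla's_k|$ along $W$ (the paper's ``key point,'' valid once $k>4R^2/\eta^2$), the quantitative Sard lemma with Gaussian peak sections, and the coloring globalization; your final Morse-theoretic Lefschetz argument on $\log|s_k|$ is likewise Donaldson's.

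One assertion in your last step is stated too loosely and would fail as written: asymptotic holomorphicity alone does \emph{not} make $-h=-\log|s_k|$ approximately plurisubharmonic on all of $V-W_k$, because the error terms in $\ddJ(-\log|s_k|)$ coming from $\nabla''s_k$ and $\nabla\nabla''s_k$ carry factors of $1/|s_k|$ and $1/|s_k|^2$, so the estimate degenerates precisely near $W_k$. The repair uses the transversality you already have: at a critical point of $h$ one has $d|s_k|=0$, which (in the notation of Lemma \ref{l:qhs}, where $|\nabla's|=\tfrac12|d\rho-J^*\lambda\rho|$ and $|\nabla''s|=\tfrac12|d\rho+J^*\lambda\rho|$) forces $|\nabla's_k|=|\nabla''s_k|\le Rk^{-1/2}$, hence $|\nabla s_k|<\eta$ for large $k$, hence $|s_k|>\eta$ by uniform transversality; the Hessian/index estimate is then needed only on $\{|s_k|>\eta\}$, where it does hold. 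It is also worth noting how this step relates to the paper's own machinery: Giroux needs more than an estimate at critical points, namely the global inequality $|\nabla''s_k|\le\kappa|\nabla's_k|$ (quasiholomorphicity), which genuinely fails for general asymptotically holomorphic, uniformly transverse sections at the zeros of $\nabla's_k$; Lemma \ref{l:xutc} and Proposition \ref{p:qhs} exist to perturb these away, after which the Liouville field is a pseudogradient of $-\log|s_k|$ everywhere and the complement is an honest Weinstein domain (Theorem \ref{t:wc}), a strictly stronger conclusion that contains your connectivity statement. Your weaker claim does not require that extra perturbation, so with the critical-point localization above your sketch is a correct outline of the cited proof.
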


This result highlights analogies between symplectic geometry and Kähler geometry
which were quite unexpected at the time, and actually the ideas and the methods
introduced by Donaldson in \cite{Do1, Do2} provide a new insight into both
fields. When $V$ is a complex projective manifold and $\omega$ a Kähler form
with integral periods, the above theorem is a classical result that follows from
the works of Bertini, Kodaira and Lefschetz. In this case, $W \subset V$ is a
complex hypersurface obtained as a transversal hyperplane section $V \cap H$ of
$V$, where $V$ is holomorphically embedded into a projective space $\CP^m$ and
$H \subset \CP^m$ is a hyperplane. As a consequence, $V - W \subset \CP^m - H
\simeq \C^m$ is a smooth affine variety and, in particular, a Stein manifold of
finite type. Moreover, $\omega \rst{ V - W } = \ddC\phi$ for some exhausting
function $\phi \map V-W \to \R$ having no critical points near $W$. Explicitly,
$\phi := -\frac1{2k\pi} \log|s|$ where $s$ is the restriction to $V \subset \CP
^m$ of the complex linear function (a holomorphic section of $\cO(1)$) defining
$H$. (Note that the operator $\dC$ here is given by $\dC\phi(v) := - d\phi(iv)$
for any tangent vector $v$.) 
 
Our main purpose in this paper is to show that any closed integral symplectic
manifold has a very similar structure:

\begin{theorem}[Stein Complements] \label{t:sc}
Let $V$ be a closed manifold and $\omega$ a symplectic form on $V$ with integral
periods. Then, for every sufficiently large positive integer $k$, there exist:
\begin{itemize}
\item 
a symplectic submanifold $W$ of codimension $2$ in $(V,\omega)$ whose homology
class is Poincaré dual to $k[\omega]$, and 
\item
a complex structure $J$ on $V - W$ such that $\omega \rst{ V - W } = \ddJ\phi$
for some exhausting function $\phi \map V - W \to \R$ having no critical points
near $W$; in particular, $(V - W, J)$ is a Stein manifold of finite type.
\end{itemize}
\end{theorem}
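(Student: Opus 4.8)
The plan is to produce $W$ by Donaldson's method and then to recognise $V-W$ as a Weinstein manifold of finite type, whose integrable complex structure is supplied by the Stein--Weinstein correspondence. Fix a Hermitian line bundle $(L,h)\to V$ carrying a unitary connection of curvature $-2\pii\,\omega$, so that $c_1(L)=[\omega]$, together with a compatible almost complex structure $J_0$ and the metric $g=\omega(\cdot,J_0\cdot)$. For large $k$, Donaldson's construction furnishes an asymptotically holomorphic section $s_k$ of $L^{\otimes k}$ that is uniformly transverse to $0$; its zero set $W:=s_k^{-1}(0)$ is then a symplectic submanifold Poincaré dual to $c_1(L^{\otimes k})=k[\omega]$, as in Theorem~\ref{t}. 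Since $W$ is dual to $k[\omega]$, the restriction map $H^2(V;\R)\to H^2(V-W;\R)$ kills $k[\omega]$ (the Thom class of $W$ maps to the dual of $W$ in $H^2(V)$ and dies on the complement by exactness of the pair sequence), so $\omega\rst{V-W}$ is \emph{exact}. This is the structural input that makes a Stein complement possible.

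Next I would write down the candidate potential and turn $V-W$ into an exact symplectic, indeed Weinstein, manifold. Set $\phi:=-\frac1{2k\pi}\log|s_k|$ on $V-W$, mimicking the Kähler formula of the introduction. Its sublevel sets $\{\phi\le c\}=\{|s_k|\ge e^{-2k\pi c}\}$ are closed in $V$ and miss $W$, hence compact, so $\phi$ is exhausting; uniform transversality of $s_k$ makes $d\phi$ large and nonvanishing near $W$, so $\phi$ has no critical points there. A local computation gives $d\,d^{J_0}\phi=\omega+\eta$, where the error $\eta$ is governed by $\bar\partial s_k$ and is therefore small, and exact (both $\omega\rst{V-W}$ and $d\,d^{J_0}\phi$ being exact). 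Correcting the Liouville form $\lambda_0:=-d^{J_0}\phi$ by a small primitive of $\eta$ yields $\lambda$ with $d\lambda=\omega$; the associated Liouville field, being $C^0$-close to the $g$-gradient of $\phi$, is gradient-like for a Morse perturbation of $\phi$ with finitely many critical points, all lying in the compact region away from $W$. Thus $(V-W,\omega,\lambda,\phi)$ is a Weinstein manifold of finite type, and near $W$ --- where the asymptotically holomorphic picture is modelled on the holomorphic normal bundle --- I would fix an integrable reference structure with $\omega=\ddJ\phi$ on the nose, to be kept fixed in everything that follows.

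Finally I would upgrade the almost complex structure to an integrable one. By Eliashberg's theorem and the Cieliebak--Eliashberg correspondence, a finite-type Weinstein structure (with the deformation fixed near infinity, i.e. near $W$) is homotopic to a Stein structure: there is an integrable $J$, agreeing with the chosen model near $W$, for which $\phi$ is $J$-convex and the Kähler form $\omega_J:=\ddJ\phi$ is Weinstein-homotopic to $\omega$. Since $\omega$ and $\omega_J$ are cohomologous exact forms joined by a path of symplectic forms and agreeing near the ends, a Moser argument on $V-W$ (using finite type to control the noncompact end) produces a diffeomorphism $\Psi$, equal to the identity near $W$, with $\Psi^*\omega_J=\omega$; replacing $J$ by $\Psi^*J$ then gives an integrable complex structure with $\ddJ\phi=\omega$ on $V-W$ and no critical points of $\phi$ near $W$, whence $(V-W,J)$ is Stein of finite type by Grauert. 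The main obstacle is precisely this last reconciliation: Donaldson's machinery delivers only an almost complex structure, and passing to an honest integrable $J$ while preserving the exact identity $\omega=\ddJ\phi$ forces one through the deep Stein--Weinstein theory --- with the dimension-four case requiring separate care and the surface case $n=1$ being immediate.
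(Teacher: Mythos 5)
Your overall route --- a Donaldson section $s_k$, then a Weinstein structure on $V-W$ with potential $\phi=-\log|s_k|$ (up to scale), then the Cieliebak--Eliashberg Weinstein-to-Stein theorem plus a Moser argument to get $\omega\rst{V-W}=\ddJ\phi$ on the nose --- is exactly the paper's route, and your final step matches its Section C (Theorem \ref{t:CE-13.5}, Corollary \ref{c:wsd}; incidentally no special care is needed in dimension $4$, since the existence direction of the Stein--Weinstein correspondence holds there too). The genuine gap is in the middle step, and it is precisely what the paper spends all of Section B repairing. For a \emph{raw} asymptotically holomorphic, uniformly transverse section, the estimates give the upper bound $|\nabla''s_k|\le Rk^{-1/2}$ but \emph{no lower bound on $|\nabla's_k|$ away from $W$}: uniform transversality controls $\nabla s_k$ only where $|s_k|\le\eta$. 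Zeros of $\nabla's_k$ in the bulk are unavoidable (the relevant Euler class is dominated by $k^n[\omega]^n\ne0$ for large $k$, so there are roughly $k^n$ of them), and near such points your error $\epsilon:=d^{J_0}\phi-\lambda$, while $C^0$-small, is \emph{not} small compared with $|d\phi|$. Concretely, at a critical point of $\phi$ one has $d\phi=0$ but $\lambda=-\epsilon\ne0$ in general, so the pseudogradient inequality $\dvf\lambda\cdot\phi\ge c\,|\dvf\lambda|^2$ fails there, and no perturbation of $\phi$ (or small correction of the primitive) can restore it: $C^0$-closeness of $\dvf\lambda$ to the gradient of $\phi$ gives no control of the dynamics of $\dvf\lambda$ near its own zeros --- whose linearizations have eigenvalue pairs $\mu,\,1-\mu$ that may be non-real or purely imaginary --- nor does it exclude recurrence. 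This is exactly why Liouville-but-not-Weinstein domains exist, and why the paper's Proposition \ref{p:T4} exhibits symplectic hyperplane sections whose Liouville compactifications are not Weinstein; "close to a gradient, hence gradient-like after Morse perturbation" is the step that fails.

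The paper's fix perturbs the \emph{section}, not the function. First, Lemma \ref{l:xutc} runs Donaldson's globalization scheme a second time, now for the derivative $\nabla's_k$ viewed as a section of $T'V\otimes L^k$, to make it uniformly transverse; this uses the quantitative Sard-type results and is the technical heart. Then the bad region $\Gamma_k=\{|\nabla''s_k|\ge\kappa|\nabla's_k|\}$ is trapped in finitely many disjoint small balls around the finite zero set $\Delta_k$ of $\nabla's_k$ (Lemma \ref{l:Gam_k}), and inside each ball the section is replaced by (a cutoff of) its holomorphic $2$-jet in Darboux--Gaussian coordinates (Proposition \ref{p:qhs}), killing $\nabla''$ there. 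The outcome is a section that is $\kappa$-quasiholomorphic \emph{everywhere on $V$} (Definition \ref{d:qhs}), i.e.\ $|\nabla''s_k|\le\kappa|\nabla's_k|$ globally, and for such a section Lemma \ref{l:qhs} shows by a pointwise computation that $\dvf\lambda$ is an honest pseudogradient of $\phi$, Morse with critical points exactly at the (now nondegenerate) points of $\Delta_k$. This global quasiholomorphicity is the missing idea in your proposal; without it, your Weinstein structure on $V-W$ has not been constructed, and everything downstream (CE, Moser) has nothing to apply to.
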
 

Of course, the difference with the Kähler case is that, in general, the complex
structure $J$ (which depends on $k$) does not extend over the submanifold $W$.  
To make the above statement less mysterious, we need to recall a few pieces of
terminology.

A \emph{Liouville domain} is a domain%
\footnote{In this text, the word \emph{domain} means ``compact manifold with
boundary.''}
$F$ endowed with a \emph{Liouville form}, namely, a $1$-form $\lambda$ with the
following properties:
\begin{itemize}
\item
$d\lambda$ is a symplectic form on $F$, and
\item
$\lambda$ induces a contact form on $K := \del W$ orienting $K$ as the boundary
of $(F,d\lambda)$; equivalently, the \emph{Liouville vector field} $\dvf\lambda$
given by $\dvf\lambda \hook d\lambda = \lambda$ points transversely outwards
along $K$. 
\end{itemize}
A Liouville domain $(F,\lambda)$ is a \emph{Weinstein domain} if the Liouville
field $\dvf\lambda$ is gradientlike for some Morse function $\phi \map F \to
\R$, meaning that
$$ \dvf\lambda \cdot \phi \ge c\, |\dvf\lambda|^2, $$
where the norm is computed with respect to any auxiliary metric and $c$ is a
positive number depending on that metric. (Obviously, the function $\phi$ can be
further adjusted to be constant on $\del F$.)

Not every Liouville domain is a Weinstein domain. In fact, no restriction is
known for the topology of a Liouville domain while the topology of a Weinstein
domain is strongly constrained. More explicitly, the topology of a Liouville
domain $(F,\lambda)$ is largely concentrated in its skeleton (also called core,
or spine), namely the union $\Sk(F,\lambda)$ of all the orbits of $\dvf\lambda$
which do not exit through $\del F$. Indeed, the whole domain retracts onto an
arbitrary small neighborhood of $\Sk(F,\lambda)$. Due to the dilation properties
of $\dvf\lambda$ (its flow expands $\lambda$ exponentially), the closed subset
$\Sk(F,\lambda) \subset F$ has measure zero (for the volume form $(d\lambda)^n$,
where $n := \frac12 \dim F$), but for instance there are Liouville domains
$(F,\lambda)$ for which $\Sk(F,\lambda)$ is a stratified subset of codimension
$1$ \cite{Mc, Ge, MNW}. In contrast, if $(F,\lambda)$ is a Weinstein domain, 
$\Sk(F,\lambda)$ consists of the stable submanifolds of the critical points of
the Lyapunov function $\phi$. Then the same dilation properties as above force
these submanifolds to be isotropic for $d\lambda$, and so the critical indices
of $\phi$ cannot exceed $n$. In particular, the inclusion $\del F \to F$ is an
$(n-1)$-connected map. Actually, the main examples of Weinstein domains are
Stein domains, \emph{i.e.}, sublevel sets of exhausting $\C$-convex%
\footnote{We use the term $\C$-convex ---~or $J$-convex, if we want to refer to
a specific complex structure $J$~--- to mean ``strictly plurisubharmonic.''}
functions, and the work of Cieliebak-Eliashberg \cite{CE} shows that Weinstein
and Stein domains are essentially the same objects. As for the relationships
between Weinstein and Liouville domains, they remain quite mysterious.

Returning to our closed integral symplectic manifold $(V,\omega)$, we will call
\emph{hyperplane section of degree $k$ in $(V,\omega)$} any submanifold $W$ of
codimension $2$ in $V$ whose homology class is Poincaré dual to $k[\omega]$. A
preliminary remark is that the complement of a symplectic hyperplane section $W$
of arbitrary degree in $(V,\omega)$ is isomorphic to the interior of a Liouville
domain (cf.~Proposition \ref{p:lc}). There is no general evidence that
the Liouville domains obtained in this way have peculiar topological properties,
but this may happen under additional assumptions on $(V,\omega)$. Revisiting a 
construction due to Auroux \cite{Au1}, we will illustrate this by discussing the
case of symplectic hyperplane sections in tori (see Propositions \ref{p:T4} and
\ref{p:T2n}). As for the symplectic hyperplane sections provided by Donaldson's
construction, we have (see \cite[Proposition 8]{Gi}):

\begin{theorem}[Weinstein Complements] \label{t:wc}
Let $V$ be a closed manifold and $\omega$ a symplectic form on $V$ with integral
periods. Then, for every sufficiently large positive integer $k$, there exist a
Weinstein domain $(F,\lambda)$ and a map $q \map F \to V$ with the following
properties:
\begin{itemize}
\item
$q(\del F)$ is a symplectic hyperplane section $W$ of degree $k$ in $(V,\omega)$
and $\del F$ is the normal circle bundle of $W$ projecting to $W$ by $q$;
\item
$q \rst{ F - \del F } \map F - \del F \to V - W$ is a diffeomorphism, with
$q^*\omega = d\lambda$.
\end{itemize}
\end{theorem}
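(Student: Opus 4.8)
The plan is to read the Weinstein domain off Donaldson's data directly, the essential new feature (compared with an arbitrary hyperplane section) being the presence of a strictly plurisubharmonic Lyapunov function. First I would fix the manifold and the map. Let $F$ be the real oriented blow-up of $V$ along $W$: it is a compact manifold with boundary $\del F = S(N)$ the unit normal circle bundle of $W$, its interior $F - \del F$ is canonically $V - W$, and the blow-down $q \map F \to V$ is the identity on the interior and the bundle projection $S(N) \to W$ on $\del F$. This already produces $q$ with the required incidence properties, and by Proposition \ref{p:lc} the complement $V - W$ carries \emph{some} Liouville form with $d\lambda = \omega$, extending across $\del F$ to a contact form whose Liouville field points transversely outwards (i.e.\ toward $W$). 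The entire task is therefore to \emph{choose} the Liouville form so that the resulting Liouville domain is Weinstein, and this is where Donaldson's particular $W$ enters.

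Next I would build the Lyapunov function. Let $s$ be the asymptotically $J$-holomorphic section of $L^{\otimes k}$ (with $L \to V$ Hermitian of curvature $-2\pii\,\omega$ and $J$ a compatible almost complex structure) whose transverse zero set is $W$, and set $\phi := -\frac1{2k\pi}\log|s|$ on $V - W$. Exactly as in the Kähler case, $\phi$ is exhausting with $\phi \to +\infty$ along $W$, and the key identity $\omega = \ddJ\phi$ of the holomorphic model is now replaced by $\ddJ\phi = \omega + e_k$, where the error $e_k$ measures the non-holomorphicity of $s$ and can be arranged to vanish near $W$ and to be uniformly $O(k^{-1/2})$ elsewhere. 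For $k$ large this makes $\phi$ strictly $J$-convex on all of $V - W$, and since $e_k = d\beta_k$ is exact ($\omega\rst{V-W}$ is exact, its class being carried by the Thom class of $W$) with $\beta_k$ small and supported in the interior, I may take $\lambda := \dJ\phi - \beta_k$, so that $d\lambda = \omega$ exactly. For a $J$-convex $\phi$ the field $\dvf{\dJ\phi}$ is the gradient of $\phi$ for the metric $\ddJ\phi(\cdot,J\cdot)$, hence gradient-like for $\phi$ with a definite constant; as $\beta_k$ is $C^1$-small, $\dvf\lambda$ remains gradient-like, and strict $J$-convexity bounds the Morse indices of $\phi$ by $n$.

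It then remains to package this as a Weinstein domain with the prescribed boundary. Because $(V-W,J)$ is Stein of finite type, the skeleton $\Sk(F,\lambda)$ is compact and $\phi$ has no critical points near $W$; there the level sets $\{\phi = c\}$ are circle bundles over $W$ converging to $S(N)$ as $c \to +\infty$, and in fibre polar coordinates one checks that $\lambda = \dJ\phi$ extends smoothly across $\del F = \{r = 0\}$ to a connection $1$-form on $N$, with $\dvf\lambda$ transverse to $\del F$. A bounded increasing reparametrisation $\rho$ of $\phi$, made Morse by a $C^2$-small perturbation supported away from $\del F$ and constant on $\del F$, is a Lyapunov function for $\dvf\lambda$ with all critical points in the interior. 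This exhibits $(F,\lambda)$ as a Weinstein domain and, with the first paragraph, proves the theorem.

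The main obstacle is the uniform plurisubharmonicity of $\phi$ all the way down to $W$: there one divides by $|s| \to 0$, so it is not a priori clear that the non-holomorphicity of $s$ stays negligible in $\ddJ\phi$. This is precisely what Donaldson's uniform transversality controls, giving a definite lower bound on $|\nabla s|$ along $W$ against which $|\bar\del s| = O(k^{-1/2})$ is of strictly lower order; alternatively, choosing $J$ integrable and $s$ holomorphic on a fixed neighbourhood of $W$ makes $e_k$ vanish there and reduces the matter to the bulk estimate, where $|s|$ is bounded below and the definite positivity of $\omega$ dominates. Finally, I note that granting the Stein Complements theorem (Theorem \ref{t:sc}) shortcuts this analysis entirely: one then has an honest $J$ with $\omega\rst{V-W} = \ddJ\phi$ and no critical points near $W$, so $\lambda = \dJ\phi$ needs no correction and the same blow-up and reparametrisation yield the Weinstein domain directly.
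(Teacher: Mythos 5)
Your overall frame (blow up $V$ along $W$, take $\phi := -\log|s_k|$, and for $\lambda$ the potential $1$-form of the connection in the trivialization $s_k/|s_k|$ --- which is what your $\dJ\phi - \beta_k$ amounts to) is indeed the paper's starting point, but the proposal breaks down at exactly the point to which the paper devotes all of Section~\ref{s:wc}. The pseudogradient inequality $\dvf\lambda \cdot \phi > 0$ is equivalent, pointwise, to $|\nabla''s_k| \le \kappa\,|\nabla's_k|$ with $\kappa < 1$ (Lemma~\ref{l:qhs}). Near $W$ this holds automatically, since uniform transversality gives $|\nabla's_k| \ge \eta/2$ while $|\nabla''s_k| \le Rk^{-1/2}$; so, contrary to your last paragraph, the region near $W$ is the \emph{easy} part. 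The failure is in the bulk, at the critical points of $\phi$. Such points must exist ($\phi$ is exhausting and $V-W$ retracts onto a nonempty skeleton), and at any critical point $p$ one has $d\rho(p)=0$, hence $|\nabla''s_k(p)| = |\nabla's_k(p)|$ exactly: quasiholomorphy fails there for every $\kappa<1$ unless $\nabla s_k(p)=0$. Equivalently, the Weinstein inequality $\dvf\lambda\cdot\phi \ge c\,|\dvf\lambda|^2$ forces $\lambda(p)=0$, i.e.\ $\nabla s_k(p)=0$, at every critical point $p$ of $\phi$, and Donaldson's axioms (an \emph{absolute} bound $|\nabla''s_k| = O(k^{-1/2})$, never a bound \emph{relative} to $|\nabla's_k|$) in no way guarantee that $\nabla''s_k$ vanishes where $\nabla's_k$ does. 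This is also why your perturbation step is invalid: gradient-likeness is not stable under $C^1$-small changes of the Liouville form, because near the zeros of $\dvf\lambda$ the perturbation must be small relative to $|\dvf\lambda|$, which tends to $0$. Concretely, at a critical point $p$ of $\phi$ your form satisfies $d\phi(p)=0$ but $\lambda(p) = -\beta_k(p) \ne 0$ in general, so the inequality fails at $p$ no matter how small $\beta_k$ is. Nor can you repair this by choosing a different Lyapunov function: near the uncontrolled zeros of $\nabla's_k$, the dynamics of $\dvf\lambda$ (hyperbolicity of its zeros, absence of recurrence) is simply not constrained by Donaldson's estimates, whereas your bulk estimate $\ddJ\phi = \omega + e_k$, even where valid, only concerns the positivity of $\ddJ\phi$, not the behavior of the Liouville field of a primitive of $\omega$ itself.

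This missing step is the genuinely new content of the paper's proof, for which your proposal has no substitute: Lemma~\ref{l:xutc} re-runs Donaldson's local polynomial approximation and globalization scheme for the sections $\nabla's_k$ themselves, making them uniformly transverse; Lemma~\ref{l:Gam_k} then confines the bad set $\Gamma_k = \{|\nabla''s_k| \ge \kappa|\nabla's_k|\}$ to disjoint small balls around the finite zero set $\Delta_k$ of $\nabla's_k$; finally the section is modified near each point of $\Delta_k$ by interpolation with the holomorphic quadratic model $f_0 s_{a,k}$, so that quasiholomorphy holds everywhere and the critical points of $\phi$ become nondegenerate. Two smaller points. Choosing $J$ integrable and $s_k$ holomorphic near $W$ is neither available (one has no advance control on $W$, and $(W,\omega\rst W)$ need not admit any compatible integrable structure) nor useful, since the trouble is not near $W$. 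And your closing shortcut via Theorem~\ref{t:sc} is circular: in the paper, Theorem~\ref{t:sc} is deduced from Theorem~\ref{t:wc} together with the Cieliebak--Eliashberg theory, not the other way around.
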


Theorem \ref{t:sc} is then a corollary of Theorem \ref{t:wc} and
the results of \cite{CE}.

\begin{remark}[About Tiles]
The proofs of Theorems \ref{t:sc} and \ref{t:wc} are variants of
Donaldson's proof of Theorem \ref{t}. In particular, with the terminology used
by Auroux in \cite{Au2}, the symplectic hyperplane sections they produce are the
zero sets of ``asymptotically holomorphic and uniformly transverse sections'' of
certain prequantization line bundles. It then follows from Auroux's uniqueness
theorem \cite[Theorem 2]{Au2} that, for every sufficiently large integer $k$,
these various symplectic hyperplane sections lie in the same Hamiltonian isotopy
class. Thus, Theorems \ref{t:sc} and \ref{t:wc} can essentially be
rephrased by saying that the symplectic hyperplane sections given by Donaldson's
construction have Stein, resp.~Weinstein, complements.     
\end{remark}

In \cite{Bi}, Biran adopted a very fruitful new viewpoint on the decomposition
of a complex projective manifold $V$ described at the beginning of this paper.
Instead of regarding $V$ as decomposed into a complex hyperplane section $W$ and
the affine variety $V - W$, he considered $V$ as consisting of the skeleton of
$V - W$ (this Stein manifold can be compactified to a Weinstein domain) and its
complement. His key observation is that the latter is a simple symplectic object
that he calls a ``standard symplectic disk bundle'' over $W$ (see the discussion
preceding Corollary \ref{c:sdb} for a precise definition). As a byproduct of
Theorem \ref{t:wc}, we can extend Theorem 1.A of \cite{Bi} as follows:

\begin{corollary}[Generalization of Biran's Decomposition] \label{c:bi-1.A}
Let $V$ be a closed manifold and $\omega$ a symplectic form on $V$ with integral
periods. Then, for every sufficiently large positive integer $k$, there exists 
an isotropic skeleton $\Delta \subset V$ whose complement $V - \Delta$ has the
structure of a standard symplectic disk bundle of area $1/k$ over a symplectic
manifold $W$. 
\end{corollary}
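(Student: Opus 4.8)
\section*{Proof proposal}

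The plan is to deduce the corollary directly from Theorem \ref{t:wc} by reinterpreting the Weinstein complement in the spirit of Biran. First I would apply Theorem \ref{t:wc} to obtain, for every sufficiently large $k$, a Weinstein domain $(F,\lambda)$ together with a map $q \map F \to V$ such that $W := q(\del F)$ is a symplectic hyperplane section of degree $k$, the boundary $\del F$ is the normal circle bundle of $W$ with $q \rst{\del F}$ the bundle projection, and $q \rst{F - \del F} \map F - \del F \to V - W$ is a symplectomorphism onto $(V-W,\omega)$ with $q^*\omega = d\lambda$.

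The next step is to produce the skeleton. Since $(F,\lambda)$ is a Weinstein domain, the core $\Sk(F,\lambda)$ is the union of the stable manifolds of the critical points of a Lyapunov function for $\dvf\lambda$, hence is isotropic for $d\lambda$ and contained in the interior $F - \del F$. Setting $\Delta := q\bigl(\Sk(F,\lambda)\bigr)$ and using that $q\rst{F-\del F}$ is a symplectomorphism onto $(V-W,\omega)$, I obtain that $\Delta \subset V - W \subset V$ is isotropic for $\omega$; this is the desired isotropic skeleton.

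It then remains to recognize $V - \Delta$ as a standard symplectic disk bundle. The Liouville flow of $\dvf\lambda$ identifies $F - \Sk(F,\lambda)$ symplectically with the symplectization collar $\bigl(\del F \times (0,1],\, d(t\,\lambda_0)\bigr)$, where $\lambda_0 := \lambda \rst{\del F}$ and $\del F$ sits at $t = 1$, the skeleton being the $t \to 0$ end. Pushing this forward by $q$, which over $W$ collapses the circle fibers of $\del F$ at $t = 1$, turns each product (fiber circle)$\times(0,1]$ into a symplectic disk with centre on $W$; hence $V - \Delta = q\bigl(F - \Sk(F,\lambda)\bigr)$ is a disk bundle over $W$ with zero section $W$. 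Since $\del F$ is the unit bundle of the normal bundle $N_W$ of $W$, the form $\lambda_0$ is a prequantization contact form whose curvature is $\omega \rst W$ and whose Euler class is $e(N_W) = k[\omega \rst W]$; therefore the period of $\lambda_0$ along a fiber circle, which by Stokes equals the symplectic area of a disk fiber, is $1/k$. This is exactly the standard symplectic disk bundle of Biran, now produced in arbitrary integral symplectic generality, and it extends Theorem 1.A of \cite{Bi}.

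The main point to verify carefully, and essentially the only place where more than bookkeeping is needed, is that the contact form $\lambda_0 = \lambda\rst{\del F}$ supplied by the Weinstein structure agrees, up to the deformations allowed in the symplectization model, with the prequantization contact form of the circle bundle $\del F \to W$, so that the collar caps off smoothly to the standard disk bundle with $W$ symplectically embedded as the zero section carrying the correct normal data. This compatibility is precisely what the assertions of Theorem \ref{t:wc} near $\del F$ encode --- namely that $\del F$ is the normal circle bundle of $W$ and that $q^*\omega = d\lambda$ on $F - \del F$; granting it, the identification with the standard model and the area computation run exactly as in Biran's original argument.
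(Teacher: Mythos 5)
Your proposal is correct and follows essentially the same route as the paper: take $\Delta = q\bigl(\Sk(F,\lambda)\bigr)$ for a Weinstein domain given by Theorem \ref{t:wc}, identify $F - \Sk(F,\lambda)$ with the symplectization piece $SK_{\le\alpha}$ via the Liouville flow, and collapse the circle fibers of $\del F$ to recognize $V - \Delta$ as Biran's standard disk bundle of area $1/k$. The only difference is organizational: the paper packages your third and fourth paragraphs as Corollary \ref{c:sdb} (whose hypotheses, via Proposition \ref{p:lc}, make explicit the prequantization-connection property of $\lambda \rst{\del F}$ that you rightly single out as the key compatibility to verify).
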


Actually, one can take for $\Delta$ the skeleton of any Weinstein domain as in
Theorem \ref{t:wc}. We refer the reader to \cite{Bi} for applications
of Corollary \ref{c:bi-1.A} to intersection problems. 

\begin{acknowledgments}
I wish to thank Jean-Paul Mohsen for the multiple exchanges we had over the
years about Donaldson's work on complex and symplectic geometry; his approach to
the theory (see for instance \cite{Mo2}) was strongly influential. I also thank
Hélène Eynard-Bontemps for fixing many misprints in the manuscript.
\end{acknowledgments}

\subsection{Symplectic hyperplane sections and Liouville domains} \label{s:lc}

We begin with a simple observation:

\begin{proposition}[Liouville Complements] \label{p:lc}
Let $V$ be a closed manifold, $\omega$ a symplectic form on $V$ with integral
periods and $W \subset V$ a symplectic hyperplane section of degree $k$. Then
there exists a Liouville domain $(F,\lambda)$ and a map $q \map F \to V$ with
the following properties:
\begin{itemize}
\item
$q(\del F) = W$ is the symplectic hyperplane section, $\del F$ is the normal
circle bundle of $W$ projecting to $W$ by $q$, and $-2k\pii\lambda$ defines
a unitary connection on $\del F$ with curvature form $-2k\pii\omega \rst W$;
\item
$q \rst{ F - \del F } \map F - \del F \to V - W$ is a diffeomorphism, and
$q^*\omega = d\lambda$.
\end{itemize}
\end{proposition}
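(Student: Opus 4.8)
The plan is to realise $V-W$ as the interior of a Liouville domain by writing down an explicit primitive of $\omega$ coming from a prequantisation line bundle, and then to read off the boundary data from the associated connection. First I would set up the algebraic data. As $W$ is a cooriented codimension-$2$ submanifold, its normal bundle $N_W$ is a complex line bundle, and there is a Hermitian line bundle $L\to V$ with a section $s$ vanishing transversally exactly along $W$ and an isomorphism $L\rst W\cong N_W$; then $c_1(L)=\mathrm{PD}[W]=k[\omega]$. Since $k\omega$ is a closed form representing $c_1(L)$, I can equip $L$ with a Hermitian metric and a unitary connection $\nabla$ whose curvature is exactly $F_\nabla=-2k\pii\,\omega$.

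On $V-W$ the section $s$ is nonvanishing, hence trivialises $L$, and $\mu:=s^{-1}\nabla s$ is a $\C$-valued $1$-form there. As $\nabla$ is abelian, its curvature in this trivialisation is $d\mu=F_\nabla=-2k\pii\,\omega$; taking imaginary parts gives $d(\mathrm{Im}\,\mu)=-2k\pi\,\omega$. Thus
$$\lambda:=-\tfrac1{2k\pi}\,\mathrm{Im}(s^{-1}\nabla s)\quad\text{satisfies}\quad d\lambda=\omega\ \text{ on }V-W.$$
(Only the imaginary part matters, since $\mathrm{Re}\,\mu=d\log|s|$ is exact; this is consistent with $k[\omega]\rst{V-W}=0$, which forces $\omega$ to be exact on $V-W$.)

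The crux is to check that $\lambda$ is a Liouville form with contact-type level sets near $W$. Using the symplectic neighbourhood theorem I would reduce to the disk-bundle model of $N_W$, in which $r=|s|$ is the radial coordinate and $\theta$ the fibre angle; there $\mathrm{Im}\,\mu=d\theta+(\text{horizontal connection term})$, and a direct computation yields $\dvf\lambda=\bigl(\tfrac r2-\tfrac1{2k\pi r}\bigr)\del_r+(\text{bounded})$. In particular, near $W$ the Liouville field points towards $W$ with $|\dvf\lambda|\to\infty$, so for small $\eps>0$ it is outward-transverse to the circle bundle $\{|s|=\eps\}$. Hence each sublevel set $\{\,|s|\ge\eps\,\}=V\setminus\{|s|<\eps\}$ is an honest Liouville domain: $d\lambda=\omega$ is symplectic on it and $\dvf\lambda$ exits transversally through the boundary, where $\lambda$ restricts to the prequantisation contact form (this uses that $\omega\rst W$ is symplectic). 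Equivalently, $-\log|s|$ is an exhausting function on $V-W$ whose sublevel sets are such domains, so $W$ sits at the completing contact end.

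It remains to package this with the map $q$. The boundaries above are circle bundles of $N_W$, i.e.\ copies of the normal circle bundle of $W$; to realise the blow-down I would take $F$ to be the oriented real blow-up of $V$ along $W$, in whose polar coordinates $(r\ge0,\theta)$ the $1$-form $\mathrm{Im}\,\mu=d\theta+\cdots$ is smooth up to $\{r=0\}$, so $\lambda$ extends smoothly to $F$. Then the blow-down $q\map F\to V$ is a diffeomorphism $F-\del F\to V-W$ with $q^*\omega=d\lambda$, it is the circle-bundle projection on $\del F$, and $q(\del F)=W$; and $-2k\pii\,\lambda=i\,\mathrm{Im}\,\mu$ restricts on $\del F$ to the connection $1$-form $i(d\theta+\cdots)$, whose curvature is $d(-2k\pii\,\lambda)\rst W=-2k\pii\,d\lambda\rst W=-2k\pii\,\omega\rst W$, as required. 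I expect the third step to be the main obstacle: the decisive term $-\tfrac1{2k\pi r}\del_r$ in $\dvf\lambda$ — which produces both the outward-transversality and the blow-up of $|\dvf\lambda|$ placing $W$ at infinity — comes precisely from the normal part of the curvature $-2k\pii\,\omega$, i.e.\ from $W$ being symplectic, and making this rigorous requires the neighbourhood normal form above.
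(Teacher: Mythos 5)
Your construction is, up to the last step, exactly the paper's own: the paper takes $F=\Cl\bigl(u(V-W)\bigr)\subset P$ with $u=s/|s|$ and lets $\lambda$ be induced by the connection form $\alpha$, which is literally your $-\frac1{2k\pi}\,\mathrm{Im}(s^{-1}\nabla s)$, and your local formula $\dvf\lambda=\bigl(\frac r2-\frac1{2k\pi r}\bigr)\del_r+(\text{bounded})$ is correct. The gap is the final claim that the real oriented blow-up $F$, with $\lambda$ smoothly extended, \emph{is} the desired Liouville domain. It is not. Since $\lambda$ is smooth on $F$ and $q$ is smooth, continuity gives $d\lambda=q^*\omega$ on all of $F$; but along $\del F$ this form is degenerate: $dq$ kills the fibre direction $\del_\theta$, and its image is the hyperplane $\R\,\del_r\oplus T_wW$, on which $\omega$ has kernel the radial line, so $\ker(d\lambda)=\mathrm{span}(\del_r,\del_\theta)$ at every boundary point. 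Thus $d\lambda$ is not symplectic along $\del F$, which is the defining requirement of a Liouville domain. Your own computation shows the same obstruction from another angle: $|\dvf\lambda|\sim\frac1{2k\pi r}\to\infty$ near $W$, so the Liouville field cannot extend to the blow-up boundary, whereas on a genuine Liouville domain it must be a smooth vector field, outward-transverse along the boundary. This degeneration is precisely what the paper flags after constructing the same $(F,\lambda)$, and it is the whole reason for Lemma \ref{l:ld}.

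The repair is not cosmetic: one must reattach the boundary with a \emph{different smooth structure}. In your normal model $\lambda=\bigl(\frac{r^2}2-\frac1{2k\pi}\bigr)d\theta+\cdots$, the correct boundary coordinate is $s=1-k\pi r^2$: in the $s$-structure one has $d\lambda=-\frac1{2k\pi}\,ds\wedge d\theta+\cdots$, symplectic up to the boundary, with $\dvf\lambda$ smooth and outward-pointing there; but $r\mapsto s=1-k\pi r^2$ is a smooth homeomorphism with a fold at $r=0$, not a diffeomorphism, so this genuinely changes $F$ as a smooth manifold with boundary (and the blow-down $q$ is then smooth only on the interior, which is all the Proposition asks). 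In general this regluing is carried out by Lemma \ref{l:ld}: one glues $V-W$ to the compact piece $SK_{\le\alpha}$ of the symplectization of $K=\del F$ along the collar of Liouville orbits exiting through $K$. Your intermediate observations (the truncations $\{|s|\ge\eps\}$ are honest Liouville domains, and $W$ sits at a contact-type end) are the right intuition, but note that the end is a \emph{finite} cone --- by your formula the Liouville flow reaches $W$ in finite time --- so what is required is a capping at $s=1$ with the $s$-smooth structure, not the blow-up; as stated, your packaged $(F,\lambda)$ fails to be a Liouville domain.
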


A Liouville domain as above will be called a \emph{Liouville compactification}
of $V - W$.

\begin{remark}[Liouville Domains and Symplectic Hyperplane Sections] \label{r:lcomp}
Conversely, take a Liouville domain $(F,\lambda)$ whose boundary $\del F$ has
the structure of a principal circle bundle over a manifold $W$, and assume that
$-2k\pii\lambda$, for some positive integer $k$, induces a (unitary) connection 
form on $\del F$. Then the quotient $V$ of $F$ by the equivalence relation which
collapses every fiber of $\del F \to W$ to a point is an integral symplectic
manifold in which $W$ sits as a symplectic hyperplane section of degree $k$.
\end{remark}

\begin{proof}
Let $L \to V$ be a Hermitian line bundle whose Chern class is a lift of
$k[\omega]$, and denote by $P \subset L$ the unit circle bundle with projection
$p \map P \to V$. By standard obstruction theory, $L$ has a section $s$ whose
zero set equals $W$ and is cut out transversely. Then $u = s/|s|$ is a section
of $P$ over $V - W$, and the set
$$ F = u (V-W) \cup p^{-1}(W) = \Cl \bigl(u(V-W)\bigr) \subset P $$
is a smooth compact submanifold of $P$ with boundary $K := p^{-1}(W)$, which can
be viewed as the result of a ``real oriented blowup'' of $V$ along $W$. 

Fix a unitary connection $\nabla$ on $L$ with curvature form $-2k\pii\omega$. On
the principal $\U_1$-bundle $P$, the connection $\nabla$ is given by a $1$-form
$-2k\pii\alpha$ where $\alpha$ is a real contact form such that $d\alpha = p^*
\omega$. Thus, the $1$-form $\lambda$ induced by $\alpha$ on $F$ restricts to a
contact form on $K$, and satisfies
$$ u^*d\lambda = u^*d\alpha = u^*(p^*\omega) = (p \circ u)^* \omega = \omega. $$
Therefore, $(F,\lambda)$ is essentially the required Liouville domain, except
that $d\lambda$ degenerates along $K = \del F$ (the kernel of $d\alpha$ is 
spanned by the vector field generating the $\U_1$-action, and hence is tangent
to $K$). Lemma \ref{l:ld} below explains how to solve this problem by attaching
the boundary differently.
\end{proof}

Now recall that the \emph{symplectization} of a contact manifold $(K,\xi)$ is
the symplectic submanifold $SK$ of $T^*K$ consisting of the non-zero covectors
$\beta_x \in T^*_xK$, $x \in K$, whose cooriented kernel is $\xi_x$ (all contact
structures are cooriented in this paper). This is an $\R_{>0}$-principal bundle
over $K$ whose sections are the global Pfaff equations of $\xi$. Thus, any such
$1$-form $\alpha$ determines a splitting 
$$ SK = \bigl\{s \alpha_x \in T^*K \with (s,x) \in \R_{>0} \times K\bigr\} 
   \simeq \R_{>0} \times K . $$
We denote by $K_\alpha \subset SK$ the graph of $\alpha$, and by $SK_{<\alpha}$
(resp.\ $SK_{\le\alpha}$) the subset of $SK$ given by the condition $s<1$ (resp.\
$s \le 1$).

\begin{lemma}[Boundary Degenerations of Liouville Domains] \label{l:ld}
Let $F$ be a domain and $\lambda$ a $1$-form on $F$ which is a positive contact
form on $K := \del F$ and whose differential $d\lambda$ is a symplectic form on
$F - K$ but may degenerate along $K$. Then the singular foliation spanned by
$\dvf\lambda$ in $F - K$ extends to a foliation of $F$ transverse to $K$ and, 
denoting by $U$ the open collar consisting of all orbits which exit through $K$,
there exists a unique smooth homeomorphism
$$ h = h_\lambda \map U \to SK_{\le\alpha} $$
such that:
\begin{itemize}
\item
$h$ is the identity on $K \cong K_\alpha$ and induces a diffeomorphism between
$U - K$ and $SK_{<\alpha}$;
\item
$\lambda \rst U = h^*\lambda^\xi$ where $\lambda^\xi$ is the canonical $1$-form
on $SK$.
\end{itemize}
Furthermore, the singularities of $h$ are exactly the points of $K$ where 
$d\lambda$ degenerates and, in particular, the points where the $2n$-form
$(d\lambda)^n$ vanishes transversely (with $n := \frac12 \dim F$) correspond to
folds. 
\end{lemma}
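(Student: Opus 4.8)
The plan is to build $h$ from the dynamics of $\dvf\lambda$ near $K$, the crucial preliminary observation being that, although $\dvf\lambda$ itself blows up where $d\lambda$ degenerates, the \emph{line field} it spans extends smoothly and transversely across $K$. To see this, fix an auxiliary volume form $\Omega_0$ on $F$ compatible with the orientation that $d\lambda$ induces on $F-K$, and let $Y_0$ be the smooth vector field determined by $Y_0\hook\Omega_0 = \lambda\wedge(d\lambda)^{n-1}$. Writing $(d\lambda)^n = D\,\Omega_0$ and using $\dvf\lambda\hook(d\lambda)^n = n\,\lambda\wedge(d\lambda)^{n-1}$, one gets $\dvf\lambda = (n/D)\,Y_0$ on $F-K$, so $\R\,Y_0$ extends the line field spanned by $\dvf\lambda$. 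Along $K$ the $(2n-1)$-form $Y_0\hook\Omega_0 = \lambda\wedge(d\lambda)^{n-1}$ pulls back to the contact volume $\alpha\wedge(d\alpha)^{n-1}\neq 0$; were $Y_0$ tangent to $K$ this pullback would vanish, so $Y_0$ is nonzero and transverse to $K$. Thus $\R\,Y_0$ is the sought foliation transverse to $K$, $U$ is the union of its leaves meeting $K$, and the exit map $\sigma\map U\to K$ (sending a point to the endpoint of its leaf on $K$) is a smooth submersion with $\sigma\rst K = \id$.

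Next I would produce the normal form $\lambda\rst U = S\,\sigma^*\alpha$. Using the $Y_0$-flow as a smooth coordinate transverse to $K$, define $S$ along each leaf as the Liouville dilation factor, i.e.\ by $S\rst K = 1$ and $Y_0\cdot\log S = D/n$ (equivalently $\dvf\lambda\cdot\log S = 1$); this is a genuinely smooth, positive ODE solution even where $D$ vanishes. To check the relation, set $\beta := \lambda - S\,\sigma^*\alpha$. Since $\dvf\lambda \parallel Y_0$ is tangent to the leaves and $\sigma$ is constant on them, both $\lambda$ and $\sigma^*\alpha$ kill $\dvf\lambda$, so $\dvf\lambda\hook\beta = 0$; a short computation using $\mathcal{L}_{\dvf\lambda}\lambda = \lambda$, $\mathcal{L}_{\dvf\lambda}\sigma^*\alpha = 0$ and $\dvf\lambda\cdot S = S$ gives $\mathcal{L}_{\dvf\lambda}\beta = \beta$ on $U-K$. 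On $K$ one has $S = 1$ and $\sigma = \id$, so $\beta$ vanishes on $TK$; together with $\dvf\lambda\hook\beta = 0$ and transversality this yields $\beta\rst K = 0$. The relation $\phi_t^*\beta = e^t\beta$ for the Liouville flow on $U-K$, applied by flowing each interior point out to $K$ in finite Liouville time, then forces $\beta\equiv 0$. Setting $h := (S,\sigma)\map U\to \R_{>0}\times K \cong SK$ gives $h^*\lambda^\xi = S\,\sigma^*\alpha = \lambda$ and $h\rst K = \id$.

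I would then read off the singularities and the global properties. Since $h^*\lambda^\xi = \lambda$, also $h^*\bigl((d\lambda^\xi)^n\bigr) = (d\lambda)^n$; as $(d\lambda^\xi)^n$ is a volume form on $SK$, the Jacobian of $h$ relative to $\Omega_0$ is a nonzero multiple of $D = (d\lambda)^n/\Omega_0$. By hypothesis $d\lambda$ is nondegenerate on $F-K$, so $D>0$ there and $h$ is a local diffeomorphism of $U-K$ onto $SK_{<\alpha}$, its critical points being exactly the points of $K$ where $d\lambda$ degenerates. Where $(d\lambda)^n$ vanishes transversely, $dD$ is conormal to $K$, so $D$ has a simple zero in the inward leaf direction; hence $S$ has a nondegenerate critical point along that leaf and $\ker dh = \R\,Y_0$ is transverse to the critical locus, which is the fold condition. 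That $h$ is a homeomorphism onto $SK_{\le\alpha}$ follows because along each leaf $S$ increases strictly from $0$ to $1$ (monotonicity from $D>0$ in the interior, and completeness of the inward Liouville flow, which stays in the compact interior and cannot re-meet $K$, forcing $S\to 0$); thus $h$ is a proper continuous bijection, and since $F$ lies on one side of $K$ the folds destroy only the smoothness of $h^{-1}$, not injectivity. Uniqueness is immediate: any $h'$ with $h'^*\lambda^\xi=\lambda$ carries the Liouville foliation to the fibers of $SK$ and restricts to $\id$ on $K$, forcing $\sigma'=\sigma$, and then $\lambda\wedge(d\lambda)^{n-1} = (S')^n\,\sigma^*(\alpha\wedge(d\alpha)^{n-1})$ forces $S'=S$.

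The main obstacle is the tension between the blow-up of $\dvf\lambda$ along the degeneration locus and the required smoothness of $h$. The resolution is to route all the dynamics through the smooth extended field $Y_0$, so that $S$ solves a genuinely smooth ODE rather than being read off a singular flow; to prove the normal form by a Lie-derivative argument that only invokes the Liouville flow on the nondegenerate interior $U-K$; and finally to recognize the boundary degeneration as a fold by means of the identity expressing the Jacobian of $h$ as a nonvanishing multiple of $(d\lambda)^n/\Omega_0$.
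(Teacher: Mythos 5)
Your construction is essentially the paper's: your $Y_0$ is (up to the factor $n$) the paper's field $\nu$ defined by $\nu \hook \mu = n\,\lambda\wedge(d\lambda)^{n-1}$, your transversality argument along $K$ is the same, and your dilation factor $S$ is the paper's $e^{w}$. Your discussion of the fold points, properness, and uniqueness is, if anything, more detailed than the paper's. But the step that actually establishes the normal form, namely $\beta := \lambda - S\,\sigma^*\alpha \equiv 0$, is justified by an argument that does not work, and this is a genuine gap.

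You deduce $\beta \equiv 0$ from $\phi_t^*\beta = e^t\beta$ (Liouville flow on $U-K$) ``by flowing each interior point out to $K$ in finite Liouville time'' and using $\beta\rst{K} = 0$. Written out, this says $\beta_p(v) = e^{-t}\,\beta_{\phi_t(p)}\bigl(D\phi_t(p)\,v\bigr)$, and asks you to let $t$ tend to the exit time $T$. The first factor $\beta_{\phi_t(p)}$ does tend to $0$, but the second factor $D\phi_t(p)\,v$ blows up exactly at the degenerate points of $K$: in the fold model, where $D = (d\lambda)^n/\Omega_0$ vanishes linearly in a collar coordinate $r$, the Liouville field behaves like $-r^{-1}\del_r$, its time-$t$ flow satisfies $r(t) = \sqrt{r_0^2 - 2t}$, and the radial derivative $\del r(t)/\del r_0 = r_0/r(t)$ diverges as $r(t)\to 0$. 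Since $\beta$ is merely smooth with $\beta\rst{K} = 0$, you only know $|\beta_x| \le C\,r(x)$, so the product is $O(r_0)$ --- bounded, but with no reason to tend to $0$; the limit argument is inconclusive. Nor can you save it by working only at nondegenerate boundary points and invoking density: in the lemma's main application (Proposition \ref{p:lc}), $d\lambda$ degenerates at \emph{every} point of $K$, and more generally the degenerate locus is exactly where the lemma has content. Your closing claim that it suffices to ``only invoke the Liouville flow on the nondegenerate interior $U-K$'' is therefore the weak point: the limiting step re-imports the boundary degeneracy.

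The fix is available inside your own setup and is precisely what the paper does: run the ODE along the smooth field $Y_0$ rather than the Liouville field. Since $Y_0\hook\beta = 0$ everywhere (by continuity from $U-K$), one has $\mathcal{L}_{Y_0}\beta = (D/n)\,\mathcal{L}_{\dvf{\lambda}}\beta = (D/n)\,\beta$ on $U-K$, and both sides extend continuously to $U$. Pulling back by the $Y_0$-flow $f_s$, the family $\beta_s := f_s^*\beta$ of $1$-forms on $K$ satisfies the linear ODE $\tfrac{d}{ds}\beta_s = \bigl((D/n)\circ f_s\bigr)\,\beta_s$ with smooth bounded coefficient and initial condition $\beta_0 = 0$, whence $\beta \equiv 0$ by uniqueness for linear ODEs. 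This is literally the paper's computation $\tfrac{d}{dt}f_t^*\lambda = (v\circ f_t)\,f_t^*\lambda$, $f_0^*\lambda = \alpha$, which yields $f_t^*\lambda = e^{w}\alpha$ directly and never evaluates the Liouville flow at the boundary.
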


As a result, one can change $(F,\lambda)$ to a genuine Liouville domain just by
gluing $F - K$ with $SK_{\le\alpha}$ along $U - K \cong SK_{<\alpha}$.

\begin{proof}
Let $\mu$ be an arbitrary positive volume form on $F$ and consider the function
$v := (d\lambda)^n / \mu$. We shall show that the vector field $\nu$ given by
$\nu \hook \mu = n \lambda \wedge (d\lambda)^{n-1}$ has the following
properties:
\begin{itemize}
\item
$\nu$ is non-singular along $K$ and points transversely outwards;
\item
$\nu = v \dvf\lambda$ at every point where $d\lambda$ is non-degenerate;
\item
the flow $f_t$ of $\nu$ is defined for all $t \le 0$ and the diffeomorphism
$$ f \map \R_- \times K \to U, \quad (t,x) \mapsto f_t(x), $$
satisfies $f^*\lambda = e^w \alpha$ where $w(t,x) = \int_0^t v(f_s(x))\, ds$.  
\end{itemize}
The first two properties show that $\nu$ generates a foliation transverse to $K$
which extends the foliation spanned by $\dvf\lambda$. The third property implies
that the map $h \map U \to SK_{\le\alpha}$ defined by
$$ h \circ f (t,x) = e^{ w(t,x) } \alpha_x $$
is a smooth homeomorphism with the desired behavior. Moreover, $h$ is unique
since the identity is the only homeomorphism of $SK_{\le\alpha}$ which fixes
$K_\alpha$ pointwise and induces a diffeomorphism of $SK_{<\alpha}$ preserving
$\lambda^\xi$.

The contact property of $\lambda$ means that $\lambda \wedge (d\lambda)^{n-1}$ 
induces a positive volume form on $K$, so $\nu$ is non-singular along $K$ and
points transversely outwards. Next, at any point where $d\lambda$ is symplectic,
$$ \dvf\lambda \hook d\lambda^n = v \dvf\lambda \hook \mu 
 = n \lambda \wedge (d\lambda)^{n-1} = \nu \hook \mu, $$
so $\nu = v \dvf\lambda$. In particular, $\nu \hook d\lambda = v \lambda$ and
this equality holds everywhere on $F$ by continuity.

To compute the form $f^*\lambda$, note that it vanishes on $\del_t$, $t \in
\R_-$, because $Df(\del_t) = \nu$ and $\nu \hook \lambda = 0$. Thus $f^*\lambda$
at a point $(t,x)$ is just (the pullback of) $f_t^*\lambda$ at point $x$.
Furthermore, $f_t^*\lambda$ satisfies the linear differential equation 
$$ \tfrac d{dt} f_t^*\lambda = f_t^* (\nu \cdot \lambda)
 = f_t^* (\nu \hook d\lambda) = f_t^* (v \lambda) 
 = (v \circ f_t) \, f_t^*\lambda . $$
Since $f_0^*\lambda = \alpha$, we obtain 
$$ f_t^* \lambda = \exp \left( \int_0^t (v \circ f_s)\, ds\right) \, \alpha, $$
as claimed.
\end{proof}

We now briefly describe the notion of standard symplectic disk bundle, referring
to \cite[Subsection 2.1]{Bi} for a more detailed discussion. The most relevant
approach here is as follows. Consider a closed integral symplectic manifold
$(W, \omega_W)$ and denote by $p \map K \to W$ a principal $\U_1$-bundle  whose
Chern/Euler class is an integral lift of $[\omega]$. Fix any connection $1$-form
$-2\pii\alpha$ on $K$ such that $d\alpha = p^*\omega_W$. Then $\alpha$ is a
contact form on $K$ and the quotient of the manifold $SK_{\le\alpha}$ that we
obtain by collapsing each circle fiber in $K = K_\alpha$ to a point has the
structure of an open disk bundle $U$ over $W$ and inherits a symplectic form
$\omega_U$ from $SK$ whose restriction to the zero section $W$ is $\omega_W$.
Moreover, each fiber of $U \to W$ is a symplectic disk of area $1$ (by Stokes'
theorem). The symplectic manifold $(U, \omega_U)$ is what Biran calls a \emph
{standard symplectic disk bundle of area $1$} over $W$ (see \cite[Remarks 2.1]
{Bi}). If the form $\frac1k \omega_W$ also has integral periods for some integer
$k \ge 1$ then $(U, \frac1k \omega_U)$ is named a \emph{standard symplectic disk
bundle of area $1/k$}.  

Given a Liouville domain $(F,\lambda)$ with boundary $K := \del F$, the manifold
$F - \Sk(F,\lambda)$, equipped with the $1$-form $\lambda$, is isomorphic to
$SK_{\le \lambda \rst K}$ with its canonical $1$-form. Thus, as a consequence of
Proposition \ref{p:lc}, we have:

\begin{corollary}[Standard Disk Bundles in Symplectic Manifolds] \label{c:sdb}
Let $V$ be a closed manifold, $\omega$ a symplectic form on $V$ with integral 
periods, $W$ a symplectic hyperplane section of degree $k$ and $(F,\lambda)$ a
Liouville compactification of $V - W$. Then the complement of $Sk(F,\lambda)$ in
$(V,\omega)$ has full measure and is a standard symplectic disk bundle of area
$1/k$.
\end{corollary}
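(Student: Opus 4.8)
The plan is to combine the structure of $V-W$ furnished by Proposition~\ref{p:lc} with the isomorphism $(F - \Sk(F,\lambda),\lambda) \cong (SK_{\le\alpha}, \lambda^\xi)$ recalled just above, where $K := \del F$ and $\alpha := \lambda \rst K$. Let $q \map F \to V$ be the map of Proposition~\ref{p:lc}: its restriction to $F - \del F$ is a symplectomorphism onto $(V - W, \omega)$, and $q\rst{\del F} \map K \to W$ is the normal circle bundle projection, which I will denote $p$. Because $\Sk(F,\lambda)$ is a compact subset of the open set $F - \del F$, its image $\Delta := q(\Sk(F,\lambda))$ is compact, disjoint from $W$, and closed in $V$; it is the set whose complement must be understood. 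I would establish the two claims in turn: that $V - \Delta$ has full measure, and that it is a standard symplectic disk bundle of area $1/k$.

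The full-measure claim is quick. The dilation properties of $\dvf\lambda$ recalled in the introduction force $\Sk(F,\lambda)$ to be a null set for $(d\lambda)^n$, and since $q\rst{F - \del F}$ is a symplectomorphism it carries this null set to a null set $\Delta$ for $\omega^n$; hence $V - \Delta$ has full measure in $(V,\omega)$.

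For the disk-bundle structure I would identify $V - \Delta$ with the space $U$ obtained from $SK_{\le\alpha}$ by collapsing each circle fiber of $K = K_\alpha$ to a point. The isomorphism $(F - \Sk, \lambda) \cong (SK_{\le\alpha}, \lambda^\xi)$ carries $F - \del F - \Sk$ to $SK_{<\alpha}$ and $\del F$ to $K_\alpha$; composing with $q$, whose restriction to $\del F$ collapses the circle fibers of $p \map K \to W$ exactly as the disk-bundle construction collapses $K_\alpha$ onto the zero section, realizes $V - \Delta = W \cup (V - W - \Delta)$ as $U = W \cup (U - W)$. Under this identification $\omega$ corresponds to $\omega_U := d\lambda^\xi$: away from $W$ both equal $d\lambda$, and along the zero section both restrict to $\omega\rst W$. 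Proposition~\ref{p:lc} moreover gives $d\alpha = p^*(\omega\rst W)$ and tells us that $-2k\pii\,\alpha$ is the connection form of $K \to W$, so Stokes' theorem computes the area of a fiber of $U \to W$ as $\int_{p^{-1}(w)} \alpha = 1/k$.

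The one genuinely nontrivial point is to reconcile this factor of $k$ with the definition of a standard disk bundle recalled above, whose area-$1$ normalization uses the connection $-2\pii\,\alpha$. I would do this by rescaling. Put $\alpha_1 := k\alpha$, so that $-2\pii\,\alpha_1 = -2k\pii\,\alpha$ is a connection form in the area-$1$ normalization with $d\alpha_1 = p^*(k\,\omega\rst W)$ and integral Euler class $k[\omega\rst W]$; collapsing the fibers of $SK_{\le\alpha_1}$ therefore yields the standard symplectic disk bundle $(U_1, \omega_{U_1})$ of area $1$ over $(W, k\,\omega\rst W)$. The fiberwise dilation $m_k \map SK \to SK$, $\beta \mapsto k\beta$, restricts to a diffeomorphism $SK_{\le\alpha} \to SK_{\le\alpha_1}$ satisfying $m_k^*\lambda^\xi = k\lambda^\xi$, and hence descends to an isomorphism $(U, \omega_U) \cong (U_1, \tfrac1k\omega_{U_1})$. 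Since $\tfrac1k(k\,\omega\rst W) = \omega\rst W$ has integral periods, the right-hand side is by definition the standard symplectic disk bundle of area $1/k$, and the corollary follows.
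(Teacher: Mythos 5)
Your proof is correct and follows essentially the same route as the paper, which disposes of this corollary in a single sentence by combining Proposition~\ref{p:lc} with the isomorphism between $(F-\Sk(F,\lambda),\lambda)$ and $(SK_{\le\alpha},\lambda^\xi)$. The details you supply --- the measure-zero argument for the image of the skeleton and the fiberwise dilation $m_k$ reconciling the connection $-2k\pii\alpha$ of Proposition~\ref{p:lc} with the area-$1/k$ normalization in Biran's definition --- are precisely what the paper leaves implicit.
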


Corollary \ref{c:bi-1.A} follows readily from Theorem \ref {t:wc} and Corollary
\ref{c:sdb}. 

\medskip

In the remainder of this section, we make a couple of remarks on the topology of
symplectic hyperplane sections in tori. We begin with an observation of Auroux
\cite{Au1, Au4} which shows that the Liouville domains given by Proposition \ref
{p:lc} need not be Weinstein domains:

\begin{proposition}[Auroux] \label{p:T4}
In the standard symplectic torus of dimension $4$, there exist disconnected 
symplectic hyperplane sections of arbitrarily large even degrees.
\end{proposition}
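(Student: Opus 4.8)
The plan is to construct disconnected symplectic hypersurfaces in the standard symplectic $4$-torus $(\T^4,\omega_0)$ explicitly, exploiting the flat product structure. Let me sketch the approach.

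The plan is to work in the flat model $\T^4 = \R^4/\Z^4$ with $\omega_0 = dx_1\wedge dy_1 + dx_2\wedge dy_2$ and to produce $W$ as a disjoint union $C_1\sqcup C_2$ of two connected symplectic surfaces. Write $H_1(\T^4;\Z) = \langle a_1,b_1,a_2,b_2\rangle$ dual to $dx_1,dy_1,dx_2,dy_2$, so that $H_2(\T^4;\Z)=\wedge^2H_1$ carries the intersection form induced by $\wedge\map \wedge^2H_1\otimes\wedge^2H_1\to\wedge^4H_1=\Z$. Here $\beta:=\mathrm{PD}[\omega_0]=a_1\wedge b_1+a_2\wedge b_2$ satisfies $\beta\cdot\beta=2$, and a symplectic hyperplane section of degree $k$ must represent $k\beta$. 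Since $(k\beta)\cdot(k\beta)=2k^2>0$, no representative can be a disjoint union of \emph{linear} tori (these have vanishing square and, being disjoint, vanishing mutual intersection, so their squares would sum to $0$); at least one component must carry positive self-intersection, which already forces higher genus and, as we will see, the even parity of $k$.

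For $k=2m$ even I would split $k\beta=\alpha_1+\alpha_2$ with $\alpha_{1,2}=m\beta\pm\gamma$ and $\gamma:=m\,(a_1\wedge a_2-b_1\wedge b_2)$. A direct computation gives $\gamma\cdot\beta=0$ and $\gamma\cdot\gamma=2m^2$, whence $\alpha_1\cdot\alpha_2=m^2(\beta\cdot\beta)-\gamma\cdot\gamma=0$ while $\alpha_i\cdot\beta=2m>0$ and $\alpha_i\cdot\alpha_i=4m^2>0$. Thus the mutual intersection number vanishes (the homological prerequisite for disjointness) yet each $\alpha_i$ has positive $\omega_0$-area. I would stress here why an honest symplectic construction is unavoidable: by the light-cone (Hodge index) inequality, two classes of positive square lying in the positive cone have strictly positive mutual intersection, so $\alpha_1$ and $\alpha_2$ can \emph{never} be represented simultaneously by curves pseudoholomorphic for one and the same tamed almost complex structure. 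This is exactly the reason the disconnected examples cannot be Kähler/Donaldson-holomorphic and must be assembled by hand.

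To realize $\alpha_1$ I would write it as $m\eta_1+m\eta_2$ with the decomposable classes $\eta_1=a_1\wedge(b_1+a_2)$ and $\eta_2=(a_2-b_1)\wedge b_2$, each the class of a linear two-torus on which $\omega_0$ restricts positively, and likewise $\alpha_2=m\eta_1'+m\eta_2'$ with $\eta_1'=a_1\wedge(b_1-a_2)$ and $\eta_2'=(a_2+b_1)\wedge b_2$. Taking $m$ parallel translates of each of these four subtori produces two configurations whose only self-intersections are the transverse double points arising from $\eta_1\cdot\eta_2=2$ inside the first and $\eta_1'\cdot\eta_2'=2$ inside the second; these points are positive, since for linear subtori the geometric intersections all carry the same sign and their sum equals the intersection number $+2$. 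Resolving each of them by Gompf's local symplectic resolution of a positive node, and noting that the incidence graph within each configuration (a complete bipartite graph between the two families) is connected, yields connected symplectic surfaces $C_1,C_2$ in the classes $\alpha_1,\alpha_2$.

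The step I expect to be the main obstacle is to make $C_1$ and $C_2$ \emph{globally} disjoint. For this I would verify that every building torus of the first configuration has vanishing intersection number with every building torus of the second, namely $\eta_i\cdot\eta_j'=0$ for all $i,j\in\{1,2\}$ (each of the four products is an explicit computation in $\wedge^4H_1$). A vanishing intersection number between linear tori means their tangent planes fail to span $\R^4$, so the two subtori can be translated off one another; the bad translation vectors form a proper subtorus, of positive codimension. Hence a single generic translation of the entire first configuration relative to the second simultaneously avoids these finitely many proper subtori and renders the two families disjoint, after which the internal resolutions are carried out in disjoint balls and disjointness is preserved. The resulting $W=C_1\sqcup C_2$ is then a disconnected symplectic surface with $[W]=\alpha_1+\alpha_2=2m\beta=\mathrm{PD}(k[\omega_0])$, and since $k=2m$ may be chosen arbitrarily large, this proves the proposition.
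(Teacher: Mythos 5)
Your construction is correct and is essentially the paper's own proof: your classes $\alpha_1,\alpha_2$ are (up to relabeling) exactly the Poincar\'e duals of $m\omega_2, m\omega_1$ in the paper's decomposition $\omega = \frac12(\omega_1+\omega_2)$ with $\omega_1 \wedge \omega_2 = 0$, your four families of linear tori coincide with the paper's configurations $\wh W_1, \wh W_2$ (with parallel copies for higher degree), and the resolution of positive nodes plus translation for disjointness is the same final step, the only cosmetic difference being that the paper checks disjointness by explicit defining equations while you use vanishing intersection numbers plus a genericity argument. One caveat: your motivational aside invoking the ``light-cone (Hodge index) inequality'' is false as stated, since the intersection form of $\T^4$ has signature $(3,3)$ rather than $(1,n)$ --- indeed your own classes $\alpha_1,\alpha_2$ (positive square, positive pairing with $\beta$, yet $\alpha_1\cdot\alpha_2=0$) are a counterexample --- and the correct statement requires the Hodge index theorem on $H^{1,1}$ of a fixed complex structure; fortunately this aside plays no role in your argument.
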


In particular, the complements of these symplectic hyperplane sections have
Liouville compactifications which are not Weinstein domains.

Interestingly enough, Auroux's argument can be ``reversed'' in higher dimensions
to prove the following:

\begin{proposition}[Connectedness in Higher Dimensional Tori] \label{p:T2n}
In the standard symplectic torus of dimension $2n \ge 6$, every symplectic
hyperplane section is connected.
\end{proposition}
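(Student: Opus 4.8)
The plan is to argue by contradiction at the level of cohomology, using that the real cohomology ring of $T^{2n}$ is the exterior algebra on $H^1$, realized by the translation-invariant (constant) forms. Suppose a symplectic hyperplane section $W$ of degree $k$ splits as $W_1\sqcup\cdots\sqcup W_m$ with $m\ge2$, and let $\alpha_i\in H^2(T^{2n};\R)$ be the Poincaré dual of $[W_i]$, identified with a constant form in $\Lambda^2(\R^{2n})^*$. I first record three facts. Each $W_i$ is a closed symplectic submanifold, so $\alpha_i\wedge\omega^{n-1}=c_i\,\omega^n$ with $c_i>0$ (symplectic volume). Distinct components are disjoint, so their homological intersection vanishes; dually $\alpha_i\wedge\alpha_j=0$ in $\Lambda^4$ for $i\ne j$. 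Finally $\sum_i\alpha_i=k[\omega]$, whence $\sum_i c_i=k$ and in particular $0<c_i<k$.

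The key reduction is a one-line manipulation: fixing $i$ and summing the disjointness relations over $j\ne i$ gives $\alpha_i\wedge(k\omega-\alpha_i)=0$, that is,
$$ \alpha_i\wedge\alpha_i=k\,\omega\wedge\alpha_i, \qquad\text{equivalently}\qquad \gamma_i\wedge\gamma_i=\tfrac{k^2}{4}\,\omega\wedge\omega, $$
where $\gamma_i:=\alpha_i-\tfrac k2\omega$. Wedging the first identity with $\omega^{n-2}$ and writing $\alpha_i=c_i\omega+\beta_i$ with $\beta_i$ primitive ($\beta_i\wedge\omega^{n-1}=0$), the cross terms drop out and I obtain $\beta_i\wedge\beta_i\wedge\omega^{n-2}=c_i(k-c_i)\,\omega^n$, a strictly positive multiple of the volume form.

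The heart of the proof is the following linear-algebra lemma, which is exactly where $n\ge3$ enters: fixing a compatible complex structure $J$ on $(\R^{2n},\omega)$, any real $2$-form $\gamma$ with $\gamma\wedge\gamma=\lambda\,\omega\wedge\omega$ for some $\lambda>0$ is necessarily of type $(1,1)$. I would prove it by decomposing $\gamma=\phi+\kappa+\bar\phi$ into its $(2,0)$, $(1,1)$ and $(0,2)$ parts and reading off the bidegree components of the equation. The $(4,0)$-part gives $\phi\wedge\phi=0$, so $\phi$ is decomposable (automatically if $n=3$, and of rank two if $n\ge4$), hence unitarily equivalent to $r\,dz_1\wedge dz_2$ when $\phi\ne0$; the $(3,1)$-part gives $\phi\wedge\kappa=0$. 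For $\phi=r\,dz_1\wedge dz_2$ the latter forces every coefficient $\kappa_{a\bar b}$ with $a\ge3$ or $b\ge3$ to vanish, so $\kappa$, and therefore $\kappa\wedge\kappa$, involves only $dz_1,dz_2,d\bar z_1,d\bar z_2$. But the $(2,2)$-part of the equation reads $\kappa\wedge\kappa=\lambda\,\omega\wedge\omega-2\phi\wedge\bar\phi$, and its right-hand side contains the term $2\lambda\,dz_1\wedge dz_3\wedge d\bar z_1\wedge d\bar z_3\ne0$ — here $n\ge3$ provides a third index and $\lambda>0$ makes the coefficient nonzero. This is impossible, so $\phi=0$. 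The argument collapses for $n=2$, where there is no third index, consistently with Auroux's disconnected sections in $T^4$.

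Applying the lemma with $\lambda=k^2/4>0$ to each $\gamma_i$ shows that every $\alpha_i$, hence every $\beta_i$, is of type $(1,1)$. Then $\beta_i$ is a primitive real $(1,1)$-form, so by the (pointwise) Hodge--Riemann relation $\beta_i\wedge\beta_i\wedge\omega^{n-2}$ is a non-positive multiple of $\omega^n$. This contradicts the strict positivity $c_i(k-c_i)>0$ found above, so $m=1$ and $W$ is connected. The main obstacle — and the only step that genuinely uses $n\ge3$ rather than the bare numerical intersection data — is the lemma; the reduction to it and the final appeal to Hodge--Riemann positivity are formal.
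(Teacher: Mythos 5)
Your proof is correct, but it follows a genuinely different route from the paper's. The paper groups the components into two pieces $W_1\sqcup W_2$, observes that disjointness gives $w_1\wedge w_2=0$ for the Poincaré dual classes (viewed as constant forms $\omega_1,\omega_2$) and hence $w^n=w_1^n+w_2^n>0$, so that after relabeling $\omega_1$ is a \emph{linear symplectic form}; the classical Lefschetz injectivity of $\omega_1\wedge\cdot\,\colon\bigwedge^2\R^{2n}\to\bigwedge^4\R^{2n}$ for $n\ge3$ then converts $\omega_1\wedge\omega_2=0$ into $\omega_2=0$, a contradiction. Thus the paper's Lefschetz-type input is taken with respect to the dual form of one of the pieces, and that single classical fact does all the work. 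You instead keep the ambient $\omega$ (with a fixed compatible $J$) as the reference structure, treat all components symmetrically --- you never need to know that some $\alpha_i$ has positive top power --- and split the work into two inputs: your bidegree lemma ($\gamma\wedge\gamma=\lambda\,\omega\wedge\omega$ with $\lambda>0$ forces $\gamma$ to be of type $(1,1)$ when $n\ge3$), which you prove from scratch, and the pointwise Hodge--Riemann inequality for primitive real $(1,1)$-forms, which contradicts your identity $\beta_i\wedge\beta_i\wedge\omega^{n-2}=c_i(k-c_i)\,\omega^n>0$. The two proofs draw on the same K\"ahler-type linear algebra (the $\mathfrak{sl}_2$ package: hard Lefschetz in the paper, Hodge--Riemann positivity for you), and both localize the role of $n\ge3$ in a way consistent with Auroux's disconnected sections in $\T^4$; the paper's argument is shorter, while yours is self-contained and yields the extra information that each dual class would have to be of type $(1,1)$. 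One detail you should make explicit in the lemma: $\phi\wedge\kappa=0$ directly kills only the coefficients $\kappa_{a\bar b}$ with $a\ge3$; to kill those with $b\ge3$ you must use that $\kappa$ is real (being the $(1,1)$-part of the real form $\gamma$), so its coefficient matrix is anti-Hermitian. This is true and easy, but it is exactly where your ``or $b\ge3$'' comes from.
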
 

\begin{proof}[Proofs of Propositions \ref{p:T2n} and \ref{p:T4}]
The main underlying remark is that, if a closed integral symplectic manifold
$(V,\omega)$ of dimension $2n$ contains a disconnected symplectic hyperplane
section $W = W_1 \sqcup W_2$, then the cohomology class $w$ Poincaré dual to
$[W] = [W_1]+[W_2]$ splits as the sum $w_1+w_2$ of two non-zero integral classes
which satisfy $w_1 \scup w_2 = 0$ and $w_i \scup w^{n-1} > 0$, $i \in \{1,2\}$.
It follows that $w^n = w_1^n + w_2^n$, so either $w_1^n$ or $w_2^n$ is non-zero
(and positive). We assume below that $w_1^n > 0$.

If $V = \T^{2n} = \R^{2n} / \Z^{2n}$, its cohomology algebra can be identified
with the exterior algebra of $\R^{2n}$. In this identification, $w_1$ and $w_2$
become exterior $2$-forms $\omega_1$ and $\omega_2$, and the hypothesis that
$w_1^n > 0$ means that $\omega_1$ is a linear symplectic form. But then, by a
classical result of Lefschetz, multiplication by $\omega_1$ defines a map  
$\bigwedge^2 \R^{2n} \to \bigwedge^4 \R^{2n}$ which is injective for $n \ge 3$.
Since $\omega_1 \wedge \omega_2 = 0$, we get to the conclusion that $\omega_2 =
0$, which contradicts our assumption that $w_1$ and $w_2$ are non-zero. This
proves Proposition \ref{p:T2n}. 

To prove Proposition \ref{p:T4} (following Auroux \cite{Au1, Au4}), we first 
notice that the symplectic form $\omega := dx_1 \wedge dx_2 + dx_3 \wedge dx_4$
on $\T^4 = \R^4 / \Z^4$ can be written as $\omega = \frac12 (\omega_1+\omega_2)$
where $\omega_1, \omega_2$ are positive linear symplectic forms with integral
periods whose product $\omega_1 \wedge \omega_2$ is zero. For instance, one can
take 
\begin{align*}
   \omega_1 &:= dx_1 \wedge (dx_2-dx_3) + (dx_3+dx_2) \wedge dx_4, \\
   \omega_2 &:= dx_1 \wedge (dx_2+dx_3) + (dx_3-dx_2) \wedge dx_4.
\end{align*}
Next, we observe that the homology classes Poincaré dual to $[\omega_1]$ and
$[\omega_2]$ are represented by the following immersed oriented submanifolds
$\wh W_1(a)$ and $\wh W_2(b)$, respectively, for any $a, b \in \T^4$:
\begin{align*}
   \wh W_1(a) &:= \{x \in \T^4 \with x_1-a_1 = x_2-x_3-a_2 = 0\} \\
 & \quad \cup \{x \in \T^4 \with x_3+x_2-a_3 = x_4-a_4 = 0\}, \\
   \wh W_2(b) &:= \{x \in \T^4 \with x_1-b_1 = x_2+x_3-b_3 = 0\} \\
 & \quad \cup \{x \in \T^4 \with x_3-x_2+b_2 = x_4-b_4 = 0\}.
\end{align*}
(The ordered set of equations given for each piece determines the orientation.)
Each cycle $\wh W_1(a)$ consists of two linear tori which are both symplectic
for $\omega_1$ and Lagrangian for $\omega_2$, and which intersect positively (in
exactly two points). Thus, $\wh W_1(a)$ is an immersed symplectic submanifold in
$(\T^4,\omega)$ with positive transverse double points. By a standard procedure
(an embedded connected sum localized near each double point), $\wh W_1(a)$ can
be desingularized to an embedded and homologous symplectic submanifold $W_1(a)$
in $(\T^4,\omega)$. Similarly, $\wh W_2(b)$ can be desingularized to an embedded
symplectic submanifold $W_2(b)$ in $(\T^4,\omega)$. Moreover, since $\wh W_1(a)$
and $\wh W_2(b)$ are disjoint for $a \ne b$, so are $W_1(a)$ and $W_2(b)$. 
Therefore, if $a \ne b$, the union $W := W_1(a) \cup W_2(b)$ is a disconnected
symplectic submanifold of $(\T^4,\omega)$ whose homology class is Poincaré dual
to $2[\omega]$; in other words, $W$ is a symplectic hyperplane section of degree
$2$. To obtain a symplectic hyperplane section of degree $2k$, just replace each
linear torus involved in the definition of $\wh W_1(a)$ and $\wh W_2(b)$ by $k$
parallel copies.
\end{proof}

\subsection{Symplectic hyperplane sections and Weinstein domains} \label{s:wc}

This section is devoted to the proof of Theorem \ref{t:wc}, and we will assume
that the reader is familiar with the techniques introduced by Donaldson in \cite
{Do1,Do2} and further developed by Auroux, notably in \cite{Au2,Au3}. Actually,
the proof of Theorem \ref{t:wc} is a variation on Donaldson's proof of Theorem
\ref{t} and we will only explain the extra arguments we need (a sketch of proof
can already be found in \cite{Gi}). We recall the setting:
\begin{itemize}
\item
$V$ is a closed manifold, $\omega$ a symplectic form on $V$ with integral
periods, $J$ an $\omega$-compatible almost complex structure and $g$ the metric
given by $g(.,.) := \omega(.,J.)$; 
\item
$L \to V$ is a Hermitian line bundle whose Chern class is a lift of $[\omega]$
and $\nabla$ is a unitary connection on $L$ with curvature form $-2\pii\omega$;
\item
$\nabla',\nabla''$ are the $J$-linear and $J$-antilinear components of $\nabla$,
respectively;
\item
$L^k$, for any integer $k$, is the $k$-th tensor power of $L$ endowed with the
connection induced by $\nabla$, which we still write $\nabla = \nabla'+\nabla''$
and whose curvature form is $-2k\pii\omega$;
\item
$g_k$, for $k \ge 1$, is the rescaled metric $g_k := kg$.
\end{itemize}
In \cite{Do1}, each symplectic hyperplane section of Theorem \ref{t} is obtained
as the zero set $W := \{s_k=0\}$ of a section $s_k \map V \to L^k$, where the
sections $s_k$, $k \gg 0$, satisfy the following properties (that we formulate
using Auroux's terminology \cite{Au2}):
\begin{itemize}
\item
The sections $s_k \map V \to L^k$ are \emph{asymptotically holomorphic}. This
means that there is a positive constant $R$ such that, for every $k$, for $0
\le j \le 2$ and at every point of $V$,
$$ |s_k| \le R, \quad |\nabla^{j+1} s_k|_{g_k} \le R \quad \text{and} \quad
   |\nabla^j\nabla''s_k|_{g_k} \le Rk^{-1/2} $$
Note that the derivatives $\nabla^{j+1}s_k$ and $\nabla^j\nabla''s_k$ with $j >
0$ involve both the connection $\nabla$ on $L^k$ and the Levi-Civita connection
of the metric $g_k$ (or $g$).
\item
The sections $s_k \map V \to L^k$ are \emph{uniformly transverse} (to $0$). This
means that there is a positive constant $\eta$ such that, for every sufficiently
large integer $k$, 
$$ |\nabla s_k(x)|_{g_k} \ge \eta \quad
   \text{at every point $x$ where $|s_k(x)| \le \eta$.} $$ 
\end{itemize}
A key point here is that any section $s_k \map V \to L^k$ satisfying the above
estimates with $k > 4R^2/\eta^2$ automatically also satisfies $|\nabla''s_k| <
|\nabla's_k|$ at every point of $W = \{s_k=0\}$, and this inequality guarantees
that $W$ is a symplectic submanifold. To prove Theorem \ref{t:wc}, we will need
a similar inequality all over $V$:

\begin{definition}[Quasiholomorphic Sections] \label{d:qhs}
Let $\kappa \in [0,1)$. We will say that a section $s_k \map V \to L^k$ is \emph
{$\kappa$-quasiholomorphic} if $|\nabla''s_k| \le \kappa\,|\nabla's_k|$ at
every point of $V$.  
\end{definition}

The geometric significance of this notion is the following:

\begin{lemma}[Quasiholomorphic Sections and Symplectic Convexity] \label{l:qhs}
Let $W$ be the zero set of a $\kappa$-quasiholomorphic section $s \map V \to
L^k$, $\kappa \in [0,1)$. Then the function
$$ \phi := -\log|s| \map V - W \to \R $$
admits a Liouville pseudogradient, namely the vector field $\dvf\lambda$ where
$-2k\pii\lambda$ is the potential $1$-form of $\nabla$ in the trivialization
$s/|s|$ on $V-W$.  
\end{lemma}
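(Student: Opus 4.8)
The plan is to pass to the unitary trivialization $u := s/|s|$ of $L^k$ over $V-W$ and to reduce the whole assertion to an algebraic comparison between the $(1,0)$ and $(0,1)$ parts of a single complex-valued $1$-form. Here ``Liouville pseudogradient'' is read as in the definition of a Weinstein domain: I must show both that $\dvf\lambda$ is the Liouville field of a primitive of $\omega \rst{V-W}$ and that it is gradientlike for $\phi$, i.e.\ that $\dvf\lambda \cdot \phi \ge c\,|\dvf\lambda|^2$ for some $c>0$. First I would record the data in the trivialization $u$. Since $-2k\pii\lambda$ is the potential form of $\nabla$, one has $\nabla u = -2k\pii\lambda \otimes u$, and reading off the curvature gives $d\lambda = \omega$ on $V-W$; thus $\lambda$ is a primitive of $\omega$ there and $\dvf\lambda$ (defined by $\dvf\lambda \hook d\lambda = \lambda$) is a genuine Liouville field. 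Writing $s = e^{-\phi}u$ and differentiating yields $\nabla s = e^{-\phi}\beta \otimes u$ with $\beta := -d\phi - 2k\pii\lambda$, so that the $J$-linear and $J$-antilinear components of $\nabla s$ are $\nabla's = e^{-\phi}\beta^{1,0}\otimes u$ and $\nabla''s = e^{-\phi}\beta^{0,1}\otimes u$; in particular $|\nabla's| = e^{-\phi}|\beta^{1,0}|$ and $|\nabla''s| = e^{-\phi}|\beta^{0,1}|$, so everything becomes an inequality about $\beta$ alone (and, being a ratio, the quasiholomorphicity hypothesis is insensitive to the choice of $g$ versus $g_k$).

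The heart of the argument is a pair of pointwise identities for $\beta$, whose real and imaginary parts are $-d\phi$ and $-2k\pi\lambda$:
\begin{align*}
|\beta^{1,0}|^2 - |\beta^{0,1}|^2 &= 4k\pi\,(\dvf\lambda \cdot \phi), \\
|\beta^{1,0}|^2 + |\beta^{0,1}|^2 &= |d\phi|^2 + 4k^2\pi^2\,|\dvf\lambda|^2 .
\end{align*}
The sum is a routine polarization once one notes that $|\dC\phi| = |d\phi|$ and $|\lambda \circ J| = |\lambda| = |\dvf\lambda|$, where $\lambda \circ J$ denotes the $1$-form $v \mapsto \lambda(Jv)$. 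The difference is the substantive point, and I expect it to be the main obstacle: expanding it via $|a+b|^2 - |a-b|^2 = 4\langle a,b\rangle$ leaves the two cross terms $\langle d\phi,\, \lambda \circ J\rangle$ and $\langle \dC\phi,\, \lambda\rangle$, and I would show that \emph{each} of them equals $\dvf\lambda \cdot \phi$. This is where the compatible triple enters: the relations $\omega(\cdot,\cdot) = g(J\cdot,\cdot)$ and the $J$-invariance of $g$ turn the defining equation $\dvf\lambda \hook \omega = \lambda$ into the metric-dual statements $(\lambda \circ J)^\sharp = \dvf\lambda$ and $\lambda^\sharp = J\dvf\lambda$, and similarly $(\dC\phi)^\sharp = J(d\phi)^\sharp$. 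Feeding these in collapses both cross terms to $g\bigl((d\phi)^\sharp, \dvf\lambda\bigr) = \dvf\lambda \cdot \phi$, giving the stated difference identity.

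Finally I would bring in the hypothesis. Quasiholomorphicity reads $|\beta^{0,1}|^2 \le \kappa^2 |\beta^{1,0}|^2$, so that
$$ \frac{|\beta^{1,0}|^2 - |\beta^{0,1}|^2}{|\beta^{1,0}|^2 + |\beta^{0,1}|^2} \ge \frac{1-\kappa^2}{1+\kappa^2}. $$
Substituting the two identities and discarding the nonnegative term $|d\phi|^2$ yields
$$ \dvf\lambda \cdot \phi \ge \frac{1-\kappa^2}{1+\kappa^2}\, k\pi\,|\dvf\lambda|^2 , $$
which is exactly the gradientlike inequality, with positive constant since $\kappa < 1$. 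Hence $\dvf\lambda$ is a pseudogradient for $\phi$ and, being simultaneously the Liouville field of $\lambda$, it is the desired Liouville pseudogradient. As a consistency check, this same inequality forces the critical points of $\phi$ (where $d\phi=0$) to be precisely the zeros of $\dvf\lambda$, as must hold for a gradientlike field: a point with $d\phi = 0$ but $\lambda \ne 0$ would have $|\nabla's| = |\nabla''s| \ne 0$, contradicting $\kappa < 1$.
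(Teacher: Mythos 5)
Your proof is correct and follows essentially the same route as the paper's: both arguments pass to the unitary trivialization $s/|s|$, split the resulting connection $1$-form $-d\phi - 2k\pii\lambda$ into its $J$-linear and $J$-antilinear parts, and use the compatibility of $g$, $J$, $\omega$ (via polarization) to turn the pointwise bound $|\nabla''s| \le \kappa\,|\nabla's|$ into the gradientlike inequality $\dvf\lambda \cdot \phi \ge c\,|\dvf\lambda|^2$ with $c$ proportional to $\tfrac{1-\kappa^2}{1+\kappa^2}$. Your sum/difference identities are a repackaging of the paper's computation of $|\nabla's|$ and $|\nabla''s|$ as $\tfrac12|d\rho \mp J^*\lambda\rho|$ combined with the identity $\dvf\lambda \cdot \phi = g_k(\lambda,\dJ\phi)$, so the two proofs differ only in bookkeeping.
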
 

As a consequence, if $s$ vanishes transversely and if $\phi := -\log|s|$ is a
Morse function, then $W$ is a symplectic hyperplane section and the Liouville
compactification of $V - W$ (see Proposition \ref{p:lc}) is a Weinstein domain.

\begin{proof}
Setting $\rho := |s|$, we have
\begin{align*}
   2\nabla' s &= d\rho - J^*\lambda\rho - i\,J^* (d\rho - J^*\lambda\rho), \\ 
   2\nabla''s &= d\rho + J^*\lambda\rho + i\,J^* (d\rho + J^*\lambda\rho), 
\end{align*}
so
\begin{align*}
   |\nabla' s| &= \tfrac12 \, |d\rho - J^*\lambda \rho|, \\ 
   |\nabla''s| &= \tfrac12 \, |d\rho + J^*\lambda \rho| .
\end{align*}
Since $s$ is $\kappa$-quasiholomorphic, we have $|\nabla''s| \le \kappa\, 
|\nabla's|$ and we obtain (after dividing by $\rho$):
$$ |\lambda + J^*d\phi| \le \kappa\, |\lambda - J^*d\phi| . $$
Now the derivative of $\phi$ along the Liouville field $\dvf\lambda$ is equal to
the inner product $g_k(\lambda,\dJ\phi)$. Thus, for $\kappa \in [0,1)$, the
above inequality implies that
$$ \dvf\lambda \cdot \phi \ge \tfrac12 \frac{ 1-\kappa^2 }{ 1+\kappa^2 }
   \left(|\lambda|_{g_k}^2 + |d\phi|_{g_k}^2\right). $$
This shows that $\dvf\lambda$ is a pseudogradient of $\phi$.
\end{proof}

With this lemma in mind, it suffices to show:

\begin{proposition}[Construction of Quasiholomorphic Sections] \label{p:qhs}
Let $s_k^0$ be asymptotically holomorphic and uniformly transverse sections $V
\to L^k$, and let $\kappa$ be any number in $(0,1)$. Then there exist
$\kappa$-quasiholomorphic sections $s_k \map V \to L^k$ such that, for every
sufficiently large integer $k$:
\begin{itemize}
\item
the section $s_k$ vanishes transversely and the symplectic hyperplane section
$W := \{s_k=0\}$ is Hamiltonian isotopic to $W^0 := \{s_k^0=0\}$; 
\item
the function $-\log|ss_k| \map V-W \to \R$ is a Morse function.
\end{itemize}
\end{proposition}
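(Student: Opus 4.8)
The plan is to build $s_k$ from $s_k^0$ by finitely many perturbations, each $C^0$-small relative to the transversality scale $\eta$ and each preserving asymptotic holomorphicity, so that the final section is uniformly transverse (hence $W:=\{s_k=0\}$ is a symplectic hyperplane section), is $\kappa$-quasiholomorphic, and has $\phi := -\log|s_k| \map V-W\to\R$ Morse. All norms are taken in $g_k$, and throughout I would use Auroux's globalization theorem \cite{Au2,Au3} to turn local (pointwise) transversality estimates into uniform ones. The three requirements are governed by the $0$-jet, the $1$-jet, and the $2$-jet of $s_k$, respectively.

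The first perturbation makes the holomorphic derivative transverse to $0$. Here $\nabla' s_k^0$ is \emph{not} itself asymptotically holomorphic: differentiating produces a curvature term $\nabla''\nabla' s_k^0$ of size $O(1)$ in $g_k$ (coming from $[\nabla'',\nabla']s_k^0$, i.e.\ from the curvature $-2k\pii\omega$ of $L^k$), so one must appeal to Auroux's refined jet-transversality machinery \cite{Au3}, the same device that yields the nondegenerate critical points of Lefschetz pencils. Using it I would arrange $\nabla' s_k$ to be uniformly transverse to $0$; its zero set $Z:=\{\nabla' s_k=0\}$ is then a uniformly finite, uniformly separated family of nondegenerate points $p_1\etc p_N$, with a linear lower bound $|\nabla' s_k|_{g_k}\ge\eta'\,d_{g_k}(\cdot,p_i)$ on a fixed $g_k$-ball about each. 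Uniform transversality of $s_k$ itself keeps $Z$ away from $W$, since $\nabla' s_k=0$ forces $|\nabla s_k|_{g_k}=|\nabla''s_k|_{g_k}\le Rk^{-1/2}<\eta$. This already localizes the quasiholomorphicity problem: because $|\nabla''s_k|_{g_k}\le Rk^{-1/2}\to 0$, the inequality $|\nabla''s_k|\le\kappa\,|\nabla's_k|$ is automatic wherever $|\nabla's_k|_{g_k}$ is bounded below, hence everywhere outside these balls.

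Inside each ball I would correct by $\tau_i:=f_i\,\sigma$, where $\sigma:=s_k/|s_k|$ and $f_i$ is a cut-off function agreeing near $p_i$, in local holomorphic coordinates, with an antiholomorphic linear function whose differential cancels $\nabla''s_k(p_i)$, while $f_i(p_i)=0$ and $\partial f_i(p_i)=0$. This leaves $s_k(p_i)$ and $\nabla's_k(p_i)$ untouched, so the nondegenerate zero of $\nabla's_k$ persists; it is $O(k^{-1/2})$-small in $C^0$, so asymptotic holomorphicity and uniform transversality of $s_k$ survive; and it forces $\nabla''s_k(p_i)=0$. Combining the resulting bound $|\nabla''s_k(x)|_{g_k}\lesssim k^{-1/2}\,d_{g_k}(x,p_i)$ (from this vanishing and the uniform estimate on $\nabla\nabla''s_k$) with the linear lower bound on $|\nabla's_k|$ gives $|\nabla''s_k|\le\kappa\,|\nabla's_k|$ throughout the ball for $k$ large, hence $\kappa$-quasiholomorphicity on all of $V$. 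Once $s_k$ is quasiholomorphic, the computation in the proof of Lemma \ref{l:qhs} identifies $\mathrm{Crit}(\phi)$ with $Z$; but transversality of $\nabla's_k$ only makes the complex Hessian invertible, not the real one, so $\phi$ need not yet be Morse. To fix this I would use that quasiholomorphicity makes $\ddJ\phi$ close to a positive multiple of $\omega$, so $\phi$ is strictly $J$-convex, and then apply one final small transversality perturbation of the $2$-jet $\nabla^2 s_k$ at the $p_i$ — the standard passage from a strictly plurisubharmonic function to a strictly plurisubharmonic Morse one.

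It remains to note that the whole construction moves $s_k^0$ to $s_k$ through a path of asymptotically holomorphic, uniformly transverse sections, whose zero sets form a symplectic isotopy, necessarily Hamiltonian by Auroux's uniqueness theorem \cite[Theorem 2]{Au2}; this gives the Hamiltonian isotopy between $W$ and $W^0$. The main obstacle is the middle step: one must run Auroux's transversality not on $s_k$ but on the non-asymptotically-holomorphic derivative $\nabla's_k$, and then reconcile the antiholomorphic correction with the persistence of the nondegenerate zeros and with uniform transversality of $s_k$, all with constants independent of $k$. The single most delicate quantitative point is that near each $p_i$ the linearly vanishing lower bound on $|\nabla's_k|$ must dominate the $O(k^{-1/2})$ antiholomorphic part uniformly — which is exactly what the nondegeneracy produced by jet-transversality buys.
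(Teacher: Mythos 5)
Your overall strategy is the same as the paper's: (i) make $\nabla's_k$ uniformly transverse (the paper's Lemma \ref{l:xutc}; the curvature obstruction you point out is precisely the subtlety that leads the paper to invoke the real version of the quantitative Sard theorem from \cite{Mo1}, or alternatively Auroux's trick \cite{Au3}); (ii) observe that quasiholomorphicity can only fail on small balls around the finite zero set $\Delta_k$ of $\nabla's_k$ (the paper's Lemma \ref{l:Gam_k}); (iii) correct $s_k$ on those balls; (iv) get the Hamiltonian isotopy from the homotopy through asymptotically holomorphic, uniformly transverse sections. Your local correction in (iii) is a genuine variant: you kill only the value $\nabla''s_k(p_i)$ by adding a cut-off antiholomorphic linear term, and then use $|\nabla\nabla''s_k|_{g_k}\le Rk^{-1/2}$ to get $|\nabla''s_k|\lesssim k^{-1/2}d_{g_k}(\cdot,p_i)$, whereas the paper replaces $s_k$ near each $a\in\Delta_k$ by the genuinely $J_0$-holomorphic quadratic jet $f_0\,s_{a,k}$; both yield $\kappa$-quasiholomorphicity.

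The genuine gap is your Morse step. The claim that ``transversality of $\nabla's_k$ only makes the complex Hessian invertible, not the real one'' is false, and it is exactly the content of the paper's final paragraph that no further perturbation is needed. The derivative of $\nabla's_k$ at a zero is \emph{not} complex-linear: in the paper's local model one has $\nabla'_0(fs_{a,k})=\sum_i(\del_{z_i}f-\pi\ol z_i f)\,dz_i\,s_{a,k}$, so at a critical point (where $f$ is holomorphic after the correction and $df(0)=0$) this derivative is the real-linear map $v\mapsto Av-\pi f(0)\ol v$ with $A=(\del^2_{z_iz_j}f(0))$; the conjugate-linear term $-\pi f(0)\ol v$ is the curvature of $L^k$ showing up through the Gaussian weight. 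On the other hand, $\phi=-\log|f|+\tfrac\pi2|z|^2$ has quadratic part $q(v)=\tfrac\pi2|v|^2-\tfrac12\,\mathrm{Re}(v^TBv)$ with $B=A/f(0)$, and $dq(v)=0$ for some $v\ne0$ if and only if $Bv=\pi\ol v$, i.e.\ if and only if the real-linear map above is degenerate (check $n=1$, $B$ real: $q=\tfrac12\bigl((\pi-B)x^2+(\pi+B)y^2\bigr)$, degenerate exactly when $B=\pm\pi$). So uniform transversality of $\nabla's_k$ \emph{is} uniform nondegeneracy of the real Hessian of $\phi$, with a modulus that survives all the $O(k^{-1/2})$ error terms. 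Besides being unnecessary, your substitute step is not justified as stated: the perturbation must be through sections (so that the function remains of the form $-\log|s_k|$) and must preserve $\kappa$-quasiholomorphicity, with $\kappa<1$, precisely near the critical points, where $\nabla's_k$ and $\nabla''s_k$ both vanish and the inequality $|\nabla''s_k|\le\kappa|\nabla's_k|$ is most fragile --- which is the very difficulty your main step was designed to overcome; the ``standard passage'' to a plurisubharmonic Morse function perturbs functions, not sections, and does not address this. The fix is simply to delete that step and prove the equivalence above.
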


The main step in the proof is the next Lemma which provides asymptotically
holomorphic sections of $L^k$ satisfying more uniform transversality conditions.   
We recall that, given a positive number $\eta$, a Riemannian manifold $M$ and a
Hermitian vector bundle $E \to M$ endowed with a unitary connection $\nabla$, a
section $\sigma \map M \to E$ is \emph{$\eta$-transverse} (to $0$) if, at every
point $x \in M$ with $|\sigma(x)| \le \eta$, the linear map $\nabla\sigma(x)
\map T_xM \to E_x$ is surjective and has a right inverse whose operator norm
does not exceed $1/\eta$. If the real rank of $E$ equals the dimension of $M$,
it is equivalent to require that $|\nabla\sigma(x) \cdot v| \ge \eta\, |v|$ for
all vectors $v \in T_xM$.

In what follows, we consider sections $\sigma_k \map V \to E \otimes L^k$, where
$E \to V$ is a fixed Hermitian bundle and $k$ runs over all sufficiently large
integers, and we say that these sections are \emph{uniformly transverse} if they
are $\eta$-transverse for some positive $\eta$ independent of $k$, where the 
amount of transversality is measured with the metric $g_k$.

\begin{lemma}[Extra Uniform Transversality Condition] \label{l:xutc}
Let $s_k^0$ be asymptotically holomorphic and uniformly transverse sections $V
\to L^k$. For large integers $k$, the sections $s_k^0$ are homotopic, through
asymptotically holomorphic and uniformly transverse sections, to sections
$s_k^1 \map V \to L^k$ whose partial covariant derivatives $\nabla's_k$ are
uniformly transverse.
\end{lemma}

\begin{proof}[Proof of the lemma]
The proof follows step by step the path opened by Donaldson in \cite{Do1}. We
just explain here how to obtain uniform local transversality for sections of the
form $\nabla's_k$. The globalization process elaborated by Donaldson in \cite
{Do1} then applies readily to provide the desired sections $s_k^1$. The sections
$s_k^1$ will be asymptotically holomorphic by construction. Moreover, Given any
$\delta>0$, we can arrange that all the differences $s_k^1-s_k^0$ are bounded by
$\delta$ in $\cC^1$-norm. For $\delta$ smaller than the uniform transversality
modulus of the sections $s_k^0$, it follows that, for every $t \in [0,1]$, the
sections $(1-t)s_k^0+ts_k^1$ are still asymptotically holomorphic and uniformly 
transverse.

To achieve uniform local transversality, we essentially need to show that the
derivatives $\nabla's_k^0$ are represented (in Darboux coordinates independent
of $k$ and in balls of fixed $g_k$-radius) by maps which (on smaller balls) are
approximated within $\eps$ in $\cC^1$-norm by polynomial maps of degree bounded
by $C \log(1/\eps)$, where $C$ is a positive constant (independent of $k$).

We work in complex Darboux coordinates $(z_1 \etc z_n)$ centered on a point $a$,
with the trivialization of $L^k$ given by parallel translation along rays. We
denote by $J_0$ the standard complex structure in these coordinates and by
$\nabla'_0, \nabla''_0$ (resp. $d', d''$) the $J_0$-linear and $J_0$-antilinear 
components of $\nabla$ (resp.~of the usual differential $d$). Thus we have
$$ \nabla's_k^0 - \nabla'_0s_k^0 = -\tfrac i2 \nabla s_k^0 \circ (J-J_0) $$
where the right-hand side, measured with the metric $g_k$ on a ball of fixed
radius, is bounded by $O(k^{-1/2})$ in $\cC^1$-norm. Hence it suffices to make
the partial covariant derivatives $\nabla'_0s_k$ uniformly transverse to $0$,
and for this we can use the connection of the flat metric rather than that of
$g_k$. Note that there is a little subtlety here: we want $\nabla's_k$ to be
transverse to $0$ as a section of $T'V \otimes L^k$ ($T'V$ denoting the space of
$J$-linear covectors in $T^*V \otimes \C$), but $\nabla's_k$ and $\nabla'_0s_k$ 
are not sections of the same bundle. To derive the transversality of $\nabla'
s_k$ from that of $\nabla'_0s_k$, we observe that transversality between spaces
of equal dimensions is a dilation property for all non-zero vectors (under the
differential) and this property is stable under $\cC^1$-small perturbations.

Let $s_{a,k}$ be the Gaussian section of $L^k$ at $a$. Since we work in a ball
of given radius, for $k$ sufficiently large,
$$ s_{a,k}(z) = \exp (-\pi|z|^2/2) . $$  
There are two obvious bases in the space of $J_0$-linear forms, one consisting
of the forms $dz_j s_{a,k}$ and one consisting of the forms $\nabla'_0 (z_j
s_{a,k})$. They are related by
\begin{align*}
   \nabla'_0 (z_i s_{a,k})
&= dz_i s_{a,k} + z_i \nabla'_0s_{a,k} \\
&= \left(dz_i - \pi z_i \sum_j \ol z_j dz_j\right) s_{a,k} \\
&= \sum_j \Phi_{ij}(z) \, dz_j s_{a,k}
\end{align*}
where the entries of the matrix
$$ \Phi(z) = \bigl(\Phi_{ij}(z)\bigr)
 = \bigl(\delta_{ij} - \pi z_i \ol z_j\bigr) $$
are (real) polynomials independent of $k$.  

We now represent $\nabla'_0 s_k^0$ by the map $h = (h_1 \etc h_n)$ (with values
in $\C^n$) defined by
$$ \nabla'_0 s_k^0 = \sum_j h_j \nabla'_0 (z_j s_{a,k}). $$
If $w = (w_1 \etc w_n)$ is a $\delta$-transverse value of $h$ (meaning that
$h-w$ is $\eta$-transverse to $0$) then the section
$$ \nabla'_0 \left(s_k^0 - \sum_j w_j z_j s_{a,k}\right)
 = \sum_j (h_j - w_j) \nabla'_0 (z_j s_{a,k}) $$ 
is $\eta'$-transverse to $0$ for some $\eta'$ which is a definite fraction of
$\eta$. On the other hand, considering the function $f = s_k^0 / s_{a,k}$, we
have 
\begin{align*}
   \nabla'_0 s_k^0
&= d'f\, s_{a,k} + f\, \nabla'_0 s_{a,k} \\
&= \left(d'f - \pi f \sum_i \ol z_i dz_i\right) s_{a,k} \\
&= \sum_i (\del_{z_i}f - \pi f \ol z_i) dz_i s_{a,k} .
\end{align*}
In other words, if we denote by $u = (u_1 \etc u_n)$ the map given by
$$ u_i := \del_{z_i}f - \pi \ol z_if, \quad
   1 \le i \le n, $$
we get
$$ h(z) = \Phi(z)^{-1} u(z) . $$ 
Since the function $f$ is approximately holomorphic and the entries of the
matrix $\Phi^{-1}$ are analytic functions independent of $k$, the map $h$ admits
the required polynomial approximations (see \cite{Do1} for more details).
\end{proof}

\begin{remark*}[Cheaper Approach]
The above argument appeals (implicitly) to the quantitative version of Sard's
theorem given in \cite[Section 5]{Do2} or, more accurately, to its real version
proved in \cite[Section 6]{Mo1}. This is a great result but its proof is
difficult and quite technical. One could modify our argument to appeal, instead,
to the trick proposed by Auroux in \cite{Au3}. This would definitely make the
complete proof of Theorem \ref{t:wc} technically much simpler, but it would make
our exposition here more intricate. 
\end{remark*}

\begin{proof}[Proof of Proposition \ref{p:qhs}]
First observe that, since the sections $s_k^0$ and $s_k^1$ are homotopic through
asymptotically holomorphic and uniformly transverse sections, their zero sets
$W^0 := \{s_k^0=0\}$ and $W^1 := \{s_k^1=0\}$ are Hamiltonian isotopic. We will
now construct $\kappa$-quasiholomorphic sections $s_k$ by modifying the sections
$s_k^1$ away from their zero sets. Hence, the symplectic hyperplane sections
$W := \{s_k=0\} = W^1$ and $W^0$ will remain Hamiltonian isotopic for every
large integer $k$.

Consider the sets $\Gamma_k \supset \Delta_k$ defined by
\begin{align*}
   \Gamma_k
&= \{x \in V \with |\nabla''s_k^1(x)| \ge \kappa |\nabla's_k^1(x)|\}, \\ 
   \Delta_k &= \{x \in V \with \nabla's_k^1(x) = 0\}, 
\end{align*}
where the sections $s_k^1 \map V \to L^k$ are those given by the lemma.

Since $s_k^1$ vanishes $\eta$-transversely, $\Gamma_k$ avoids a tube of fixed
$g_k$-radius (independent of $k$) about $W^1 := \{s_k^1=0\}$. Moreover, since
$\nabla's_k^1$ vanishes $\delta$-transversally, $\Delta_k$ is a discrete (hence
finite) set and:

\begin{lemma}[Location of Bad Points] \label{l:Gam_k}
For every sufficiently small positive number $\rho$ and every sufficiently large
integer $k \ge k(\rho)$, the balls $B_k(a,\rho)$, $a \in \Delta_k$, are disjoint
and cover $\Gamma_k$.
\end{lemma}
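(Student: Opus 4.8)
The plan is to establish the two assertions---disjointness and covering---separately, both resting on a single elementary mechanism: a section that is $\delta$-transverse and has uniformly bounded second covariant derivative behaves, in any ball of fixed $g_k$-radius, like an affine map of slope at least $\delta$ up to a controlled quadratic error. Here $\delta$ denotes a uniform ($g_k$-)transversality modulus of the sections $\nabla's_k^1$ supplied by Lemma \ref{l:xutc}, and $R$ the constant in the asymptotic holomorphicity estimates; note that $\nabla's_k^1$ is a section of $T'V \otimes L^k$, a bundle of real rank $2n = \dim V$, so its transversality amounts to the lower bound $|\nabla(\nabla's_k^1)\cdot v| \ge \delta\,|v|$ for all $v$, and its first and second derivatives (involving at most $\nabla^3 s_k^1$) are bounded by a fixed multiple of $R$. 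I would work throughout in $g_k$-normal coordinates about a point, together with the parallel trivialization of $L^k$ along rays, so that these bounds become genuine Taylor estimates.

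For the covering, I would first observe that $\Gamma_k$ is precisely a locus where $\nabla's_k^1$ is very small: on $\Gamma_k$ one has $\kappa\,|\nabla's_k^1| \le |\nabla''s_k^1| \le R k^{-1/2}$, hence
$$ |\nabla's_k^1| \le \tfrac{R}{\kappa}\,k^{-1/2} . $$
In particular every point of $\Gamma_k$ lies, for $k$ large, in the region $\{|\nabla's_k^1| \le \delta\}$ where transversality applies. I would then run a quantitative implicit-function (Newton) argument from any $x \in \Gamma_k$: using that $\nabla(\nabla's_k^1)$ is invertible with inverse of norm at most $1/\delta$ while the second derivative is $O(R)$, the iteration converges to a zero $a$ of $\nabla's_k^1$---that is, to a point $a \in \Delta_k$---with $d_{g_k}(x,a) \le C\,|\nabla's_k^1(x)|/\delta \le (CR/\kappa\delta)\,k^{-1/2}$. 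Since the right-hand side tends to $0$, it drops below $\rho$ once $k \ge k(\rho)$, so $x \in B_k(a,\rho)$ and the balls cover $\Gamma_k$.

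For the disjointness, I would take two distinct points $a,a' \in \Delta_k$ at $g_k$-distance $r$ and Taylor-expand $\nabla's_k^1$ from $a$. Evaluating at $a'$ and using $\nabla's_k^1(a')=0$ gives
$$ \delta\, r \le \bigl|\nabla(\nabla's_k^1)(a)\cdot v\bigr| \le \tfrac{R}{2}\,r^2 , $$
where $v$ is the displacement of $a'$; the left inequality is transversality and the right is the second-derivative bound. Hence $r \ge 2\delta/R$, a fixed separation, so taking $\rho < \delta/R$ makes the balls $B_k(a,\rho)$ pairwise disjoint (and incidentally reconfirms that $\Delta_k$ is finite).

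I expect the main obstacle to be the Newton step in the covering argument, where one must verify that the iterates remain inside the ball on which both $\delta$-transversality and the second-derivative bound hold, so that the scheme genuinely converges to a zero at the claimed distance. This is the standard local inversion estimate of the Donaldson--Auroux theory, and the uniform bounds above are exactly what keep the constants independent of $k$; the only real care needed is to check that the quadratic error never overtakes the linear term before a zero is reached.
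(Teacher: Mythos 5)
Your proposal is correct and is essentially the paper's own argument: the paper likewise starts from the estimate $|\nabla's_k^1(a)| \le R\kappa^{-1}k^{-1/2}$ at points of $\Gamma_k$ and then applies a quantitative inverse function theorem (its Lemma \ref{l:ift}, following \cite{Do2}) in Darboux coordinates, whose existence-plus-uniqueness conclusion gives both the covering and the disjointness. Your Newton iteration and your Taylor-expansion separation bound are precisely the standard proof of that same lemma, unpacked.
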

 
As in \cite[Lemma 8 and Proposition 9]{Do2}, this lemma is a consequence of the
following simple fact:

\begin{lemma}[Inverse Function Theorem] \label{l:ift}
Let $\phi \map \D^n \to \R^n$ be a map $\cC^2$-bounded by $c$ and such that
$$ |d\phi(0) \cdot v| \ge \delta\, |v| \quad \text{for all vectors $v$.} $$
If $|\phi(0)| \le \delta\rho/2$ for some $\rho \le \delta/c$, the equation 
$\phi(x) = 0$ has a unique solution $x$ in the ball of radius $\rho$ about $0$.
\end{lemma}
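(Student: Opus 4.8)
The plan is to read Lemma \ref{l:ift} as a quantitative inverse function theorem and to prove it by the Banach contraction principle applied to a Newton-type map. Write $A := d\phi(0)$. The hypothesis $|A v| \ge \delta\,|v|$ for all $v$ says that the square matrix $A$ is injective, hence invertible, with $\|A^{-1}\| \le 1/\delta$. I would then introduce $T \map \D^n \to \R^n$, $T(x) := x - A^{-1}\phi(x)$, whose fixed points are exactly the zeros of $\phi$; the whole problem reduces to showing that $T$ has a unique fixed point in the closed ball $B(0,\rho)$.

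First I would record the two estimates coming from the hypotheses. Since $\phi$ is $\cC^2$-bounded by $c$, integrating the second derivative along segments gives $\|d\phi(x) - d\phi(0)\| \le c\,|x|$ on $\D^n$, and the first-order Taylor remainder $R(x) := \phi(x) - \phi(0) - A x$ (with $R(0)=0$) satisfies $|R(x)| \le \tfrac{c}{2}|x|^2$. Differentiating $T$ gives $dT(x) = A^{-1}\bigl(d\phi(0) - d\phi(x)\bigr)$, so $\|dT(x)\| \le c\,|x|/\delta \le c\rho/\delta \le 1$ for $|x| \le \rho \le \delta/c$; thus $T$ is Lipschitz with constant at most $1$ on $B(0,\rho)$. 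Next I would check the self-map property: at the center $T(0) = -A^{-1}\phi(0)$, so $|T(0)| \le |\phi(0)|/\delta \le \rho/2$ by the hypothesis $|\phi(0)| \le \delta\rho/2$, while for $|x|\le\rho$ the remainder bound gives $T(x) - T(0) = -A^{-1}R(x)$ with $|T(x)-T(0)| \le \tfrac{c}{2\delta}|x|^2 \le \tfrac{c\rho}{2\delta}\,\rho \le \rho/2$. Hence $|T(x)| \le \rho$, so $T$ maps the closed ball into itself. With the Lipschitz bound, the contraction principle then yields a fixed point, and two distinct fixed points in $B(0,\rho)$ would violate the Lipschitz estimate, giving uniqueness.

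The one delicate point — and the main, if modest, obstacle — is the borderline equality $\rho = \delta/c$, where the Lipschitz constant of $T$ is only bounded by $1$ rather than strictly less than $1$. For uniqueness this is harmless: the open segment joining two distinct points of $B(0,\rho)$ meets the open ball, where $\|dT\| < 1$ strictly, so $|T(x)-T(y)| < |x-y|$ and a pair of distinct fixed points is impossible. For existence I would run the strict-contraction argument on a ball of radius $\rho' \in \bigl(2|\phi(0)|/\delta,\ \rho\bigr)$: there $c\rho'/\delta < 1$, and the same two estimates give $|T(x)| < \rho'$ for $|x|\le\rho'$, so $T$ is a genuine contraction of $\bar B(0,\rho')$ into itself and has a (unique) fixed point in $B(0,\rho')\subset B(0,\rho)$. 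This covers every case in which at least one of the two hypotheses is strict; the single remaining corner, $\rho = \delta/c$ and $|\phi(0)| = \delta\rho/2$ simultaneously, is then handled by a routine continuity argument, perturbing the data infinitesimally and passing to the limit. In all cases the proof comes down to the standard contraction principle together with the two elementary estimates $\|d\phi(x)-d\phi(0)\|\le c|x|$ and $|R(x)|\le \tfrac{c}{2}|x|^2$.
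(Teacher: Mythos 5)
Your proof is correct. One point of comparison worth noting: the paper does not actually prove this lemma at all --- it is stated as a ``simple fact'' and used as a black box (following \cite[Lemma 8 and Proposition 9]{Do2}), so your argument fills in a step the paper leaves to the reader, and it does so by the standard route: the Newton map $T(x) = x - A^{-1}\phi(x)$, the bounds $\|A^{-1}\| \le 1/\delta$, $\|d\phi(x)-d\phi(0)\| \le c|x|$, $|R(x)| \le \tfrac c2 |x|^2$, and the contraction principle. Your handling of the borderline case $\rho = \delta/c$ is the one genuinely delicate point, and you treat it correctly: uniqueness via strict convexity of the Euclidean ball (the open segment between two putative fixed points lies in the open ball, where $\|dT\|<1$), existence via a strictly smaller ball when $|\phi(0)| < \delta\rho/2$, and a perturbation--limit argument in the double-equality corner. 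That corner is not vacuous: in one variable, $\phi(x) = -\delta\rho/2 + \delta x - \tfrac c2 x^2$ with $\rho = \delta/c$ has its unique zero exactly at $|x|=\rho$, so the solution can sit on the boundary sphere and no strict contraction on a smaller ball can reach it; your limiting argument (perturb $\phi$ by subtracting a small multiple of $\phi(0)/|\phi(0)|$, extract a convergent subsequence of zeros in the closed ball) is the right fix. A slightly cleaner alternative for existence, which handles all cases at once: your estimates already show $T$ is a continuous self-map of the closed ball $\bar B(0,\rho)$, so Brouwer's fixed point theorem gives a fixed point with no case distinction, and your strict-convexity argument gives uniqueness; the trade-off is invoking a topological theorem instead of staying within the elementary contraction framework.
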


To prove Lemma \ref{l:Gam_k}, we apply Lemma \ref{l:ift} to the map representing
$\nabla's_k^1$ in the complex Darboux coordinates centered on a point $a$ of
$\Gamma_k$. At this point,
$$ |\nabla's_k^1(a)| \le \kappa^{-1} |\nabla''s_k^1(a)| \le
   R \kappa^{-1} k^{-1/2} $$
so the hypotheses of Lemma \ref{l:ift} are fulfilled once $k$ is sufficiently
large.

\medskip

To complete the proof of the proposition, we will modify $s_k^1$ near each point
$a \in \Delta_k$ (see \cite[Lemma 10 and the subsequent discussion]{Do2}).
Again, we work in the complex Darboux coordinates centered on $a$. For any
$\rho>0$, fix a cutoff function $\beta = \beta_\rho$ such that $\beta(z) = 1$
for $|z| \le \rho/2$, \ $\beta(z) = 0$ for $|z| \ge \rho$, and $|d\beta(z)| \le
3/\rho$ for all $z$. Write $s_k^1 = f s_{a,k}$ and denote by $f_0$ the complex
polynomial of degree $2$ given by
$$ f_0(z) = f(0) + \tfrac12 \sum_{ij}
   \del^2_{z_iz_j} f(0) z_iz_j . $$    
We then consider the sections $s_k$ defined in the coordinates $(z_1 \etc z_n)$
by
$$ s_k := \bigl(\beta f_0 + (1-\beta) f\bigr) s_{a,k}. $$
Before comparing the derivatives $\nabla's_k$ and $\nabla''s_k$, let us compare
the derivatives $\nabla'_0s_k$ and $\nabla''_0s_k$. As we already noticed, the
closeness of $\nabla's_k^1$ and $\nabla'_0s_k^1$ guarantees that the latter
derivative is $\eta/2$-transverse to $0$ on the ball of radius $\rho$ for $k$
sufficiently large. On the other hand, the identities
\begin{align*}
   d''f(0) &= \nabla''_0 s_k^1 (0), \\
   dd''f(0) &= \nabla_0\nabla''_0 s_k^1 (0) 
\end{align*}
(where $\nabla_0$ denotes the connection associated to the flat metric) show
that $|d''f(0)|$ and $|dd''f(0)|$ are bounded by $C k^{-1/2}$. Therefore, if $k$
is sufficiently large, the partial derivative $\nabla'_0 (f_0 s_{a,k})$ is so 
close to $\nabla'_0 s_k^1$ that it is $\eta/4$-transverse to $0$. Furthermore,
$f_0 s_{a,k}$ is a holomorphic section. Thus, on the ball of radius $\rho/2$
(where $\beta=1$), we have
$$ \nabla''_0 s_k(z) = 0 \quad \text{and} \quad
   |\nabla'_0 s_k(z)| \ge \frac\eta4 |z| . $$
Hence, on that same ball,
$$ |\nabla''s_k(z)| \le Ck^{-1/2} |z| \quad \text{and} \quad
   |\nabla'_0 s_k(z)| \ge \Bigl(\frac\eta4 - Ck^{-1/2}\bigr) |z| . $$
In the annular region $\rho/2 \le |z| \le \rho$, the calculations above imply
that
$$ |f(z) - f_0(z)| \le C (\rho^3 + \rho k^{-1/2}) $$ 
and, since the gradient of $\beta$ is bounded by $3/\rho$, the same arguments as
in \cite{Do2} give the desired inequalities when $\rho$ is sufficiently small.

It remains to show that the function $\phi := -\log|s_k| \map V-W \to \R$ (where
$W := \{s_k=0\}$) is a Morse function. Since $s_k$ is $\kappa$-quasiholomorphic
with $\kappa < 1$, the critical points of $\phi$ are the zeros of $\nabla's_k$,
namely the points of $\Delta_k$. It then follows form the porperties of $s_k$ in
$B_k(a,\rho/2)$, $a \in \Delta_k$, that $\nabla s_k$ vanishes transversele at
$a$, so the critical points of $\phi$ are non-degenerate.
\end{proof}

\subsection{Symplectic hyperplane sections and Stein domains}

Here we derive Theorem \ref{t:sc} from Theorem \ref{t:wc}. The
main ingredient we will use is a special case (a domain is a cobordism with
empty bottom boundary) of \cite[Theorem 13.5]{CE}:

\begin{theorem}[Cieliebak-Eliashberg] \label{t:CE-13.5}
Let $(F,\lambda)$ be a Weinstein domain and $\phi_0$ a function on $F$ with
pseudogradient $\dvf\lambda$ and regular level set $\del F = \{\phi_0=0\}$.
Then there exist a complex structure $J$ and a path of $1$-forms $\lambda_t$ on
$F$ $(t \in [0,1])$ with the following properties:
\begin{itemize}
\item
all forms $d\lambda_t$ are symplectic on $F$, and $\lambda_0 = \lambda$;
\item
all Liouville vector fields $\dvf{\lambda_t}$ are pseudogradients of $\phi_0$;
\item
$\lambda_1 = \dJ (u \circ \phi_0)$ for some convex increasing function $u \map
\R_{\le0} \to \R_{\le0}$ with $u(0) = 0$.  
\end{itemize}
\end{theorem}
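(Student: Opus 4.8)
The plan is to follow the handle-by-handle architecture underlying Cieliebak–Eliashberg, using $\phi_0$ itself as the Lyapunov function. First I would perturb $\phi_0$ slightly, keeping $\dvf\lambda$ a pseudogradient and $\del F = \{\phi_0=0\}$ regular, so that $\phi_0$ becomes Morse. Since $(F,\lambda)$ is Weinstein, the stable manifolds of the critical points of $\phi_0$ are isotropic for $d\lambda$ and the critical indices do not exceed $n := \frac12\dim F$. This yields a decomposition of $F$ into standard Weinstein handles attached along isotropic spheres and interpolated by symplectization collars of the form $SK_{\le\alpha}$ as in Lemma \ref{l:ld}.

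The core tool is the convexity criterion for reparametrizations. For any almost complex structure $J$ compatible with $d\lambda$ and any convex increasing $u$, one computes (with the sign convention $\dJ\psi(v) = -d\psi(Jv)$)
$$ \ddJ(u\circ\phi_0)(v,Jv) = u'(\phi_0)\,\ddJ\phi_0(v,Jv) + u''(\phi_0)\bigl((d\phi_0(v))^2+(d\phi_0(Jv))^2\bigr). $$
The second summand is nonnegative and vanishes precisely when $v$ lies in the complex hyperplane tangent to the level set $\{\phi_0 = c\}$, i.e.\ in the contact distribution. Hence, once $J$ is arranged so that $\phi_0$ has positive Levi form along this distribution on every level set, choosing $u$ sufficiently convex forces $u\circ\phi_0$ to be strictly $J$-convex everywhere; then $\lambda_1 := \dJ(u\circ\phi_0)$ has $d\lambda_1 = \ddJ(u\circ\phi_0)$ symplectic and $J$-compatible.

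It therefore remains to produce a single $J$, compatible with $d\lambda$, for which $\phi_0$ is $J$-convex along the contact distribution. On each symplectization collar I would take $J$ adapted to $\ker\alpha$, which makes the Levi positivity along the contact hyperplane automatic. Over each Weinstein handle I would import the standard Stein model in $\C^n$, where an explicit $J_{\mathrm{std}}$ and a plurisubharmonic Morse function realize the handle as a Stein piece whose Liouville field is $\dvf\lambda$, the isotropic attaching sphere being compatible with the contact structure on the lower boundary. The main obstacle is the globalization near the attaching regions, where the handle models must be interpolated with the collar structures; the discrepancy between $\phi_0$ and the local Stein normal forms introduces bounded error terms in $\ddJ\phi_0$. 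The convex reparametrization absorbs these: the dominant term $u''(\phi_0)\bigl((d\phi_0(v))^2+(d\phi_0(Jv))^2\bigr)$ overwhelms the bounded error once $u$ is steep enough, so the local constructions can be matched into a global $J$ with $u\circ\phi_0$ strictly $J$-convex.

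Finally, with $J$ and $u$ fixed, I would connect $\lambda$ to $\lambda_1$ by the path $\lambda_t := (1-t)\lambda + t\,\dJ(u\circ\phi_0)$. Because $J$ is compatible with $d\lambda_0 = d\lambda$ and tames $d\lambda_1 = \ddJ(u\circ\phi_0)$, every $d\lambda_t$ is $J$-tamed, hence symplectic. The pseudogradient condition $\dvf{\lambda_t}\cdot\phi_0 \ge c\,|\dvf{\lambda_t}|^2$ holds at both endpoints and, being an open inequality tied to the Levi positivity above, is propagated along the whole path by the same steepness of $u$; this gives all three asserted properties.
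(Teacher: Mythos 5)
The paper offers no proof of this statement at all: it is imported verbatim as a special case of \cite[Theorem 13.5]{CE}, whose proof (Eliashberg's existence theorem for Stein structures, as reworked in the Cieliebak--Eliashberg book) occupies several chapters there. So you are attempting a from-scratch proof of that theorem, and your outline does recover its genuine skeleton: Morse perturbation of $\phi_0$, handle-by-handle analysis, the identity $\ddJ(u\circ\phi_0)(v,Jv) = u'(\phi_0)\,\ddJ\phi_0(v,Jv) + u''(\phi_0)\bigl((d\phi_0(v))^2+(d\phi_0(Jv))^2\bigr)$, and taming along a path of forms. But the two steps that carry essentially all of the difficulty are missing or wrong. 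First, integrability: the theorem needs $J$ to be an honest complex structure --- that is the whole point, since the paper uses it to conclude that the complement in Theorem \ref{t:sc} is Stein. You propose to take the standard Stein model over each handle, a collar-adapted $J$ over each symplectization piece, and then to ``match'' them into a global $J$; but integrable complex structures cannot be patched by interpolation or cutoff functions the way compatible almost complex structures can (the latter form a contractible space, the former enjoy no such flexibility). Gluing complex structures requires biholomorphic identifications near the attaching regions, and producing these --- real-analytic approximation of the isotropic attaching spheres, holomorphic normal forms, Eliashberg's holomorphic handle attachment --- is precisely the technical core of \cite{CE}. Second, your error-absorption claim contradicts your own criterion: you correctly note that the boost term $u''(\phi_0)\bigl((d\phi_0(v))^2+(d\phi_0(Jv))^2\bigr)$ vanishes exactly on the complex tangencies of the level sets (and, one should add, at the critical points of $\phi_0$). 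Consequently no choice of $u$, however steep, can repair a Levi form that is negative in those directions; bounded errors in $\ddJ\phi_0$ produced by the handle/collar interpolation will generically have components along the complex tangencies, and these are exactly what convex reparametrization cannot absorb. Making the level sets of $\phi_0$ themselves $J$-convex across the attaching regions is the hard step, and it is the one you have waved through.

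There is also a gap in your final step. The linear path $\lambda_t = (1-t)\lambda + t\,\dJ(u\circ\phi_0)$ does give symplectic forms $d\lambda_t$ (both endpoints tame the same $J$, and taming is a convex condition), but the second bullet of the statement --- that every $\dvf{\lambda_t}$ is a pseudogradient of $\phi_0$ --- does not follow. The Liouville field is defined by $\dvf{\lambda_t}\hook d\lambda_t = \lambda_t$, so it depends on the inverse of $d\lambda_t$ and hence nonlinearly on $t$; positivity of $\dvf{\lambda_t}\cdot\phi_0$ at $t=0,1$ does not propagate to intermediate $t$, and openness of the inequality only protects small neighborhoods of the endpoints. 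This is exactly why in \cite{CE} the homotopy is constructed by hand, as a Weinstein homotopy keeping the function $\phi_0$ fixed throughout, rather than by joining the endpoints with a straight line. To salvage your argument you would need either a convexity statement for the set of Liouville forms whose fields are gradient-like for a fixed $\phi_0$ --- which is false in general --- or the handle-by-handle bookkeeping of \cite{CE}.
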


In particular, $(F,J)$ is a Stein domain and $u \circ \phi$ is a $J$-convex
function.
 
To complete the proof of Theorem \ref{t:sc}, we actually need a variant of the
above result, namely:

\begin{corollary}[Weinstein and Stein Domains] \label{c:wsd}
Let $(F,\lambda)$ be a Weinstein domain. Then there exist a complex structure
$J$ on $F$ and a $J$-convex Morse function $\phi \map F \to \R_{\le0}$, with
regular level set $\del F = \{\phi=0\}$, such that $d\lambda = \ddJ\phi$.
\end{corollary}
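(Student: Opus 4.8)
The plan is to deduce the statement from Theorem~\ref{t:CE-13.5} by a Moser-type transport argument. That theorem already produces, for a suitable auxiliary function $\phi_0$, a complex structure $J$ and a path $\lambda_t$ with $\lambda_0 = \lambda$ and $\lambda_1 = \dJ(u\circ\phi_0)$; hence, writing $\tilde\phi := u\circ\phi_0$, we have $d\lambda_1 = \ddJ\tilde\phi$ and $\tilde\phi$ is $J$-convex with $\del F = \{\tilde\phi=0\}$ a regular level set. The only gap between this and the desired conclusion is that the $J$-convex potential is attached to $\lambda_1$ rather than to $\lambda = \lambda_0$, so I would close it by transporting the pair $(J,\tilde\phi)$ back along the homotopy by a diffeomorphism.

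To begin, I would take $\phi_0$ to be a Morse Lyapunov function for the Weinstein structure $(F,\lambda)$, normalized so that $\phi_0 \le 0$ and $\del F = \{\phi_0=0\}$ is a regular level set (such $\phi_0$ exists by the very definition of a Weinstein domain), and feed it into Theorem~\ref{t:CE-13.5} to obtain $J$, the path $\lambda_t$ and $u$. Since $u$ is increasing and convex with $u(0)=0$, the function $\tilde\phi = u\circ\phi_0$ is again Morse with the same critical points and with $\del F = \{\tilde\phi=0\}$ regular. Next I would run Moser's method on the path of exact symplectic forms $\omega_t := d\lambda_t$, producing an isotopy $\psi_t \map F \to F$ with $\psi_0 = \id$ and $\psi_t^*\omega_t = \omega_0$; here the generating vector field $X_t$ is determined by $\iota_{X_t}\omega_t = -\dot\lambda_t$ up to a closed correction term that I retain as a free parameter for the boundary step below.

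Setting $\Psi := \psi_1$, I obtain $\Psi^*(\ddJ\tilde\phi) = \Psi^*(d\lambda_1) = d\lambda$. I would then define $J_F := \Psi^*J$ and $\phi := \tilde\phi\circ\Psi$. Because the operator $\dJ$ is natural under diffeomorphisms, one has $\Psi^*(\dJ\tilde\phi) = d^{J_F}(\tilde\phi\circ\Psi)$, and since $d$ commutes with $\Psi^*$ this gives $d\lambda = d\,d^{J_F}\phi$. Moreover $J$-convexity of $\tilde\phi$, the Morse property, and the regular level set $\del F = \{\tilde\phi=0\}$ are all preserved under pullback by a diffeomorphism, so $\phi \map F \to \R_{\le0}$ is a $J_F$-convex Morse function with $\del F = \{\phi=0\}$ regular, exactly as required. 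Everything in this paragraph is formal once $\Psi$ exists as a self-diffeomorphism of the domain $F$.

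The main obstacle, then, is ensuring that the Moser isotopy preserves $F$ as a domain, i.e.\ that $X_t$ is tangent to $\del F$: the interior computation is routine, but a bare solution of $\iota_{X_t}\omega_t=-\dot\lambda_t$ need not be tangent to the boundary. The key enabling fact is that throughout the homotopy $\dvf{\lambda_t}$ is a pseudogradient of the \emph{fixed} function $\phi_0$, so it points transversely outward along the regular level set $\del F$ and each $\lambda_t$ restricts there to a positive contact form $\alpha_t$. I would exploit this in two steps: first apply Gray's stability theorem to the path of contact structures $\ker\alpha_t$ to obtain an ambient isotopy normalizing the boundary data, reducing to a homotopy that is fixed near $\del F$; then apply the resulting \emph{relative} Moser argument, for which $X_t$ vanishes near $\del F$ and tangency is automatic. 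This boundary-compatible Moser step --- equivalently, the statement that a Liouville homotopy with contact-type boundary throughout induces an isomorphism of the underlying symplectic domains --- is the only genuinely technical point in the proof.
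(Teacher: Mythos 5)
Your overall strategy coincides with the paper's: feed a Lyapunov function $\phi_0$ into Theorem~\ref{t:CE-13.5}, transport the endpoint data $(J, u\circ\phi_0)$ back to $\lambda$ by a Moser isotopy of $F$, and pull back. The gap is in the one step you yourself identify as the crux: the boundary normalization. Gray's stability theorem normalizes the path of contact \emph{structures} $\ker\alpha_t$, not the contact \emph{forms} $\alpha_t$. After the ambient isotopy, the best you can say is $\lambda_t = v_t\,\lambda_0$ along $\del F$ for some positive functions $v_t \map \del F \to \R_{>0}$, and these conformal factors cannot be removed by any further diffeomorphism (two contact forms with the same kernel are in general not diffeomorphic: their Reeb dynamics differ). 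So your reduction ``to a homotopy that is fixed near $\del F$'' is not available, and the relative Moser argument you invoke does not apply. Your fallback --- choosing the closed correction term in $\iota_{X_t}\omega_t = -\dot\lambda_t$ to force tangency --- also fails: tangency of $X_t$ to $\del F$ requires the right-hand side to vanish on the characteristic foliation of $\del F$, and a closed $1$-form cannot realize an arbitrary prescribed function there (for instance, on a closed characteristic that is null-homologous in $F$ every closed form integrates to zero, while $\dot\lambda_t = \dot v_t\,\lambda_0$ need not).

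The paper closes exactly this gap with two additional ingredients that your proposal is missing. First, it rescales the whole path, $\wt\lambda_t := w_t\lambda_t$ with $w_t = 1/v_t$ on $\del F$, choosing $w_t$ so that $\dvf{\lambda_t}\cdot\log w_t > -1$; this keeps the Liouville fields pseudogradients of $\phi_0$ while making the forms genuinely agree along $\del F$, so that the Moser field is tangent to the boundary. But this rescaling destroys the very property you need at the endpoint: $\wt\lambda_1 = w_1\,\dJ\phi_1$ (with $\phi_1 = u\circ\phi_0$) is no longer $\dJ$ of any function, so pulling it back would give $d\lambda = d\,(\text{some } 1\text{-form})$ rather than $d\lambda = \ddJ\phi$. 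Second, the paper therefore proves a separate rescaling lemma for $J$-convex functions (Lemma~\ref{l:Jcf}), producing a $J$-convex Morse function $\phi$ with $\dJ\phi = w_1\,\dJ\phi_1$ along $\del F$, which restores a genuine potential compatible with the normalized path. Without an argument substituting for this lemma, your proof establishes only that $d\lambda$ is symplectomorphic to $\ddJ\tilde\phi$ on a domain whose boundary identification is uncontrolled, not the stated conclusion on $F$ itself.
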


\begin{proof}
Pick an arbitrary function $\phi_0$ on $F$ with pseudogradient $\dvf\lambda$
and regular level set $\del F = \{\phi_0=0\}$. Consider the complex structure
$J$ and the path of $1$-forms $\lambda_t$ (along with the function $u$) given by
Theorem \ref{t:CE-13.5}. Since the Liouville vector fields $\dvf{\lambda_t}$ are
all pseudogradients of $\phi_0$, each form $\lambda_t$ induces a contact form
$\alpha_t$ on $\del F$. Using Gray's stability theorem and a suitable isotopy
extension, we can arrange that the forms $\lambda_t$ have the same kernel along
$\del F$, \emph{i.e.} $\lambda_t = v_t \lambda_0$ on $\del F$ for some function
$v_t \map \del F \to \R_{>0}$.

Assume temporarily that $v_t=1$ for all $t$. Then Moser's argument provides an
isotopy $h_t$ of $F$ relative to $\del F$ such that $h_0 = \id$ and $h_t^*
d\lambda_t = d\lambda$. Then the complex structure $h_1^*J$ and the function
$h_1^* (u \circ \phi_0)$ have the desired properties.

Therefore it suffices to modify the forms $\lambda_t$ so that they coincide on
(or along) $\del F$ and still satisfy the conditions of Theorem \ref{t:CE-13.5}.
It is easy to find positive functions $w_t$ on $F$ such that $w_t = 1/v_t$ on
$\del F$ and $\dvf{\lambda_t} \cdot \log w_t > -1$. Then the forms $\wt\lambda_t
:= w_t \lambda_t$ agree along $\del F$ and satisfy the first two conditions of
Theorem \ref{t:CE-13.5}, but $\wt\lambda_1$ and $\dJ(u \circ \phi_0)$ are not
equal. Set $\phi_1 = u \circ \phi_0$ and note that
$$ \wt\lambda_1 = w_1 \lambda_1 = w_1 \dJ\phi_1. $$  
Lemma \ref{l:Jcf} below provides a function $\phi$ such that $\wt\lambda_1 =
\dJ\phi$, which completes the proof of the corollary. 
\end{proof}

\begin{lemma}[Rescaling of $J$-Convex Functions] \label{l:Jcf}
Let $F$ be a domain, $J$ a complex structure on $F$ and $\phi_1 \map F \to \R$ a
$J$-convex Morse function on $F$ with regular level set $\del F = \{\phi_1=0\}$.
For every positive function $w$ on $\del F$, there exists a $J$-convex Morse
function $\phi \map F \to \R$ equivalent to $\phi_1$ such that $\dJ\phi = w\,
\dJ\phi_1$ along $\del F$.
\end{lemma}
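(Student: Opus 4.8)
The plan is to reduce the statement to a condition on the normal derivative of $\phi$ along $\del F$, and then to realize that condition by multiplying a suitable convex reparametrization of $\phi_1$ by a positive function. Since $J$ is invertible and $\dJ g = -dg\circ J$ for any function $g$, the required identity $\dJ\phi = w\,\dJ\phi_1$ along $\del F$ is equivalent to $d\phi = w\,d\phi_1$ along $\del F$; and because $\phi$ and $\phi_1$ both vanish on $\del F$, this is nothing but the requirement that the outward normal derivative of $\phi$ be $w$ times that of $\phi_1$ at each boundary point. As $\del F = \{\phi_1=0\}$ is a regular level set, $\phi_1$ has no critical point in a thin collar $N=\{-\delta\le\phi_1\le0\}$, and I will alter $\phi_1$ only inside $N$; outside $N$ I keep $\phi$ a fixed rescaling of $\phi_1$, so that $\phi$ is automatically equivalent to $\phi_1$ and Morse with the same critical points, and it remains to arrange matters on $N$.

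The ansatz is $\phi := h\cdot(u\circ\phi_1)$, where $u\map\R\to\R$ is convex and increasing with $u(0)=0$, and $h>0$ is a smooth function equal to $w/u'(0)$ on $\del F$, constant in the normal direction near $\del F$, and equal to a constant $h_0$ outside $N$. Then $\{\phi=0\}=\{\phi_1=0\}=\del F$ since $u$ is increasing and $h>0$; along $\del F$ one has $d\phi = h\,u'(0)\,d\phi_1 = w\,d\phi_1$ because $u(\phi_1)$, and hence the term $u(\phi_1)\,dh$, vanishes there; and outside $N$ one has $\phi = h_0\,(u\circ\phi_1)$, a reparametrization of $\phi_1$, so no gluing is needed. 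The advantage of composing with $u$ is that $\psi:=u\circ\phi_1$ is globally $J$-convex for every convex increasing $u$ — its Levi form is $\ddJ\psi(X,JX) = u'(\phi_1)\,\ddJ\phi_1(X,JX) + u''(\phi_1)\bigl(d\phi_1(X)^2 + d\phi_1(JX)^2\bigr)$, manifestly positive — so, unlike a naive additive convex bump, the convexity booster never has to be cut off and the interior is never damaged.

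The only real computation is then the $J$-convexity of $\phi=h\psi$. Writing $c_0>0$ for a uniform lower bound on the Levi form of $\phi_1$ over the compact $F$, a direct expansion gives
$$\ddJ\phi(X,JX) = h\,\ddJ\psi(X,JX) + u(\phi_1)\,\ddJ h(X,JX) + 2\bigl(dh(X)\,d\psi(X) + dh(JX)\,d\psi(JX)\bigr).$$
Substituting $d\psi=u'\,d\phi_1$ and completing the square in the pair $\bigl(d\phi_1(X),d\phi_1(JX)\bigr)$ against the positive term $h\,u''\bigl(d\phi_1(X)^2+d\phi_1(JX)^2\bigr)$ absorbs the indefinite cross term and leaves
$$\ddJ\phi(X,JX)\ \ge\ \Bigl(h\,u'\,c_0 - \tfrac{2(u')^2|dh|^2}{h\,u''}\Bigr)|X|^2 - |u(\phi_1)|\,|\ddJ h|\,|X|^2.$$
Hence positivity reduces to making $u''/u'$ large compared with $|dh|^2/h^2$ and to keeping $u(\phi_1)$ small on $N$; both are achieved by taking $u$ sufficiently convex (for instance $(\log u')'\ge K$ with $K$ read off from the fixed bounds on $h$ and $dh$) and by confining the steepening to a sub-collar so thin that $|u(\phi_1)|$ stays negligible there.

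The main obstacle is exactly this balancing act, namely securing strict $J$-convexity everywhere at once. The indefinite cross term produced by a rapidly varying $w$ must be dominated near $\del F$, which demands a large amount of convexity there, while one must simultaneously avoid destroying $J$-convexity in the interior and avoid creating spurious critical points of $\phi$ in the collar (where $u'$ is comparatively small). The convex-reparametrization device dissolves the first tension for free, since $u\circ\phi_1$ is $J$-convex globally; the remaining work is the careful ordering of the constants — first fix the steepness $K$ from the (fixed) $\cC^1$-size of $h$, then shrink the collar width so that $|u(\phi_1)|$ and the normal variation of $h$ are controlled — after which $d\phi = h\,u'\,d\phi_1 + u(\phi_1)\,dh$ is non-vanishing on $N$ and $\phi$ is the desired $J$-convex Morse function equivalent to $\phi_1$.
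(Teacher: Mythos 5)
Your reduction of the boundary condition to $d\phi = w\,d\phi_1$ along $\del F$, the Levi form expansion of $\ddJ\bigl(h\,(u\circ\phi_1)\bigr)$, and the completion of the square are all correct, and your multiplicative ansatz is in the same spirit as the paper's first step. The gap is in the final ``ordering of the constants,'' which is circular and, in fact, cannot be repaired for general $w$. Your $h$ must interpolate between (the extension of) $w/u'(0)$ and the constant $h_0$ \emph{inside} the collar $N$, so its derivative bounds are not fixed data: $|dh| \sim \mathrm{osc}(w)/(\text{width of }N)$ and $|\ddJ h| \sim \mathrm{osc}(w)/(\text{width})^2$. Hence you cannot ``first fix the steepness $K$ from the (fixed) $\cC^1$-size of $h$, then shrink the collar width'': shrinking $N$ inflates $|dh|$ and $|\ddJ h|$, which inflates the required $K$, and so on. If instead you fix the transition width of $h$ first, the two requirements you need \emph{on the same region} are incompatible once $K$ is large. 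Indeed, for $X$ complex-tangent to the level sets of $\phi_1$ (where $Q := d\phi_1(X)^2 + d\phi_1(JX)^2 = 0$), both the cross term and the booster $h u'' Q$ vanish, and the only positive term left to dominate $|u(\phi_1)|\,|\ddJ h|$ is $h u' c_0$; so you need $|u(\phi_1)|/u'(\phi_1) \le c_0 h/|\ddJ h|$ throughout the transition zone. But the convexity $u''/u' \ge K$ that you imposed to absorb the cross term forces $u'$ to decay like $e^{Kt}$, hence $|u(t)|/u'(t) \ge (e^{K|t|}-1)/K$, which grows \emph{exponentially} in $K$ at any fixed depth: you control the numerator $|u(\phi_1)|$, but the steepening crushes the denominator $u'$. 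A short computation shows the two constraints can coexist only if the collar depth is bounded below by a quantity that grows with $\mathrm{osc}(w)$, whereas the available depth is bounded above by the first critical value of $\phi_1$; so the construction fails whenever $w$ has large oscillation, while the lemma is claimed for every positive $w$.

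The paper avoids exactly this trap by never deforming the weight to a constant. It extends $w$ arbitrarily to $F$ (a \emph{fixed} extension, with fixed derivative bounds) and sets $\phi_2 := (w+c\phi_1)\phi_1$. The large parameter $c$ enters only through $c\phi_1^2$, whose Levi form is $2c\phi_1\,\ddJ\phi_1 + 2c\,d\phi_1\wedge\dJ\phi_1$: the second term is the nonnegative absorbing term, and \emph{no $c$-dependent cross term is created}, the cross terms being $dw\wedge\dJ\phi_1 + d\phi_1\wedge\dJ w$, of fixed size. So one chooses $c$ large first (to absorb those), then the collar depth small (to kill the terms carrying a factor $\phi_1$); the choices are consistently ordered. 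The passage from the collar to the interior is then done not by interpolating the weight to a constant but by taking a smoothed maximum of $\phi_2$ with the globally $J$-convex affine function $\phi_3 = a\phi_1+b$, using the Cieliebak--Eliashberg technique for smoothing maxima of $J$-convex functions; a maximum requires no matching of derivatives, which is precisely what makes the interpolation cost (the source of your circularity) disappear. To salvage your ansatz you would need a substitute for this gluing idea, e.g.\ factoring $w$ into terms of small oscillation and iterating, which is a genuinely different argument.
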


By ``equivalent'', we mean that $\phi = u \circ \phi_1 \circ f$, where $u \map
\R \to \R$ is an increasing function while $h$ is a diffeomorphism of $F$.  

\begin{proof}
First extend $w$ to a positive function on $F$ and define $\phi_2 := (w+c\phi_1)
\phi_1$, where $c$ is a positive constant. Then $\del F$ is a regular component
of the zero-level set of $\phi_2$, and $\dJ\phi_2 = w\,\dJ\phi_1$ at every point
of $\del F$. Moreover,
\begin{multline*}
   \ddJ\phi_2 = (w+c\phi_1)\, \ddJ\phi_1 + \phi_1\, \ddJ (w+c\phi_1) \\
 + dw \wedge \dJ\phi_1 + d\phi_1 \wedge \dJ w + 2c\,d\phi_1 \wedge \dJ\phi_1,
\end{multline*}
so $\phi_2$ is $J$-convex near $\del F$ for any sufficiently large constant $c$.
We henceforth fix such a $c$. Then there exists a number $\delta>0$ such that
$d\phi_2$ is positive on the Liouville field $\dvf{\dJ\phi_1}$ in the collar
$\{-\delta \le \phi_1 \le 0\}$ (indeed, $d\phi_2 = w\,d\phi_1$ at every point
of $\del F$). Now set 
$$ \phi_3 = a\phi_1+b \quad \text{with} \quad 
   b := \tfrac12 \sup \{\phi_2(x) \with \phi(x) = -\delta\}, \quad
   a < \frac b \delta. $$
Clearly, $\phi_3$ is $J$-convex and we obtain the desired function $\phi$ by
smoothing the function $\max (\phi_1,\phi_2)$ (see \cite[Chapter 2]{CE} for
details on the relevant smoothing technique).
\end{proof}


\begin{thebibliography}{MM}

\frenchspacing

\bibitem[Au1]{Au1}
D. \textsc{Auroux} ---
\textit{Théorèmes de structure pour les variétés symplectiques compactes via des
techniques presque complexes}. \ 
Thèse de Doctorat, École Polytechnique, 1999. 

\bibitem[Au2]{Au2}
D. \textsc{Auroux} ---
Asymptotically holomorphic families of symplectic submanifolds. \ 
\textit{Geom. Funct. Anal.} \textbf{7} (1997), 971--995.

\bibitem[Au3]{Au3}
D. \textsc{Auroux} ---
A remark on Donaldson's construction of symplectic submanifolds. \ 
\textit{J. Sympl. Geom.} \textbf{10} (2002), 647--658.

\bibitem[Au4]{Au4}
D. \textsc{Auroux} ---
Private communication by e-mail, February 2010.

\bibitem[Bi]{Bi}
P. \textsc{Biran} ---
Lagrangian barriers and symplectic embeddings. \ 
\textit{Geom. Funct. Anal.} \textbf{11} (2001), 407--464.

\bibitem[CE]{CE}
K. \textsc{Cieliebak} and Y. \textsc{Eliashberg} ---
\textit{From Stein to Weinstein and Back: Symplectic Geometry of Affine Complex
Manifolds}. \  
Colloq. Publ. \textbf{59}, Amer. Math. Soc. 2012.

\bibitem[Do1]{Do1}
S. Donaldson ---
Symplectic submanifolds and almost-complex geometry. \ 
\textit{J. Diff. Geom.} \textbf{44} (1996), 666--705.

\bibitem[Do2]{Do2}
S. Donaldson ---
Lefschetz pencils on symplectic manifolds. \ 
\textit{J. Diff. Geom.} \textbf{53} (1999), 205--236.

\bibitem[Ge]{Ge}
H.~\textsc{Geiges} ---
Symplectic manifolds with disconnected boundary of contact type. \ 
\textit{Internat. Math. Res. Notices} (1994), 23--30.

\bibitem[Gi]{Gi}
E. \textsc{Giroux} --- 
Géométrie de contact~: de la dimension trois vers les dimensions supérieu\-res. \
\textit{Proc. Int. Cong. of Math.}, 405--414, Higher Ed. Press 2002.

\bibitem[MNW]{MNW}
P.~\textsc{Massot}, K.~\textsc{Niederkrüger} and C.~\textsc{Wendl} ---
Weak and strong fillability of higher dimensional contact manifolds. \ 
\textit{Invent. Math.} \textbf{192} (2013), 287--373.

\bibitem[Mc]{Mc}
D.~\textsc{McDuff} ---
Symplectic manifolds with contact type boundaries. \ 
\textit{Invent. Math.} \textbf{103} (1991), 651--671.

\bibitem[Mo1]{Mo1}
J.-P. \textsc{Mohsen} ---
Transversalité quantitative en géométrie symplectique : sous-variétés et
hypersurfaces. \ 
Preprint 2013 (arXiv:1307.0837).

\bibitem[Mo2]{Mo2}
J.-P. \textsc{Mohsen} ---
Infinitesimal holomorphic sections and Donald\-son's construction of symplectic
submanifolds. \ 
Preprint 2016 (arXiv:\-1610.\-06111).

\end{thebibliography}
\end{document}